\documentclass[12pt]{article}

\usepackage[margin=1.3in, top=1.3in, bottom=1.3in]{geometry}

\usepackage[utf8]{inputenc} 
\usepackage[T1]{fontenc}    
\usepackage{url}            
\usepackage{booktabs}       
\usepackage{amsfonts}       
\usepackage{nicefrac}       
\usepackage{microtype}      

\usepackage{amsmath,amssymb,amsthm,xcolor}
\usepackage{array}
\usepackage{float}

\newtheorem{remark}{Remark}
\newtheorem{theorem}{Theorem}
\newtheorem{lemma}{Lemma}
\newtheorem{corollary}{Corollary}

\newtheorem{proposition}{Proposition}
\newtheorem{definition}{Definition}
\usepackage{graphicx}
\usepackage{subcaption}
\usepackage{mdframed}
\usepackage{lipsum}
\usepackage{wrapfig}
\usepackage[hidelinks,hypertexnames=false]{hyperref}
\usepackage{tikz}
\usepackage{enumitem}
\allowdisplaybreaks
\usepackage{pgfplots}
\usetikzlibrary{intersections, pgfplots.fillbetween}
\usepackage{epstopdf}
\usepackage{algorithmic}
\usepackage{booktabs}
\usepackage{changepage}
\usepackage{multirow}
\usepackage{algorithm}

\title{\LARGE Global stability of first-order methods for coercive tame functions}

\begin{document}

\author{\large C\'edric Josz\thanks{\url{cj2638@columbia.edu}, IEOR, Columbia University, New York. Research supported by NSF EPCN grant 2023032 and ONR grant N00014-21-1-2282.} \and Lexiao Lai\thanks{\url{ll3352@columbia.edu}, IEOR, Columbia University, New York.}}
\date{}

\maketitle

\begin{center}
    \textbf{Abstract}
    \end{center}
    \vspace*{-3mm}
 \begin{adjustwidth}{0.2in}{0.2in}
~~~~We consider first-order methods with constant step size for minimizing locally Lipschitz coercive functions that are tame in an o-minimal structure on the real field. We prove that if the method is approximated by subgradient trajectories, then the iterates eventually remain in a neighborhood of a connected component of the set of critical points. Under suitable method-dependent regularity assumptions, this result applies to the subgradient method with momentum, the stochastic subgradient method with random reshuffling and momentum, and the random-permutations cyclic coordinate descent method.
\end{adjustwidth} 
\vspace*{3mm}
\noindent{\bf Keywords:} differential inclusions, Kurdyka-\L{}ojasiewicz inequality, semi-algebraic geometry

\section{Introduction}
\label{sec:Introduction}
Consider the unconstrained minimization problem
\begin{equation}
\label{eq:obj}
	\inf\limits_{x\in \mathbb{R}^n} f(x) := \frac{1}{N}\sum\limits_{i= 1}^N f_i(x)
\end{equation}
where $f_i:\mathbb{R}^n \rightarrow \mathbb{R}$ is locally Lipschitz for $i = 1,2, \ldots, N$. Such unconstrained optimization problems are central in machine learning applications such as empirical risk minimization \cite{bottou2018optimization}, low-rank matrix recovery \cite{li2019,ma2022global,zhang2022accelerating}, and the training of deep neural networks \cite{lecun2015deep}. We study some widely used first-order methods, namely the subgradient method with momentum (Algorithm \ref{alg:mag}), the stochastic subgradient method with random reshuffling and momentum (Algorithm \ref{alg:rrm}), and the random-permutations cyclic coordinate descent method (Algorithm \ref{alg:rcd}). While they are implemented by machine learning practitioners \cite{sutskever2013importance,tensorflow2015-whitepaper,paszke2019pytorch}, the analysis of these methods with constant step sizes seems to be absent from the literature when the objective is neither convex nor differentiable with a locally Lipschitz gradient (see Section \ref{sec:Literature review}).

In this paper, we provide global stability guarantees for first-order methods with constant step size for objective functions that are locally Lipschitz, coercive, and tame in an o-minimal structure on the real field (Definition \ref{def:o-minimal}). In order to do so, we show that the function values and the iterates of an iterative method eventually stabilize around some critical value (Theorem \ref{thm:converge}) and the set of critical points (Corollary \ref{cor:converge_critical}) respectively, given that the method is approximated by subgradient trajectories of the objective function (Definition \ref{def:approx_flow_new}). As it turns out, all of the aforementioned first-order methods are approximated by subgradient trajectories of locally Lipschitz functions under method-dependent regularity assumptions (Propositions \ref{prop:rrm_flow} and \ref{prop:cd_flow}) as summarized in Table \ref{tab:assumption} (random reshuffling with momentum is short for stochastic subgradient method with random reshuffling and momentum). Therefore, these methods fit into our framework and their stability is guaranteed by Theorem \ref{thm:converge} and Corollary \ref{cor:converge_critical}. To the best of our knowledge, these methods have not been studied before at such generality as in this paper. In particular, we do not require the objective function to be convex and we do not require it to be differentiable with a locally Lipschitz gradient.

The function class studied in this paper is well-suited for applications. Indeed, seemingly all continuous objective functions of interest nowadays are locally Lipshitz and tame in an o-minimal structure on the real field. Many objective functions arising in data science are coercive due to the use of regularizers. Some objectives are naturally coercive, such as in symmetric low-rank matrix recovery problems \cite{ge2016,li2019}. For functions that are not coercive, our results can still be applied if the iterates are uniformly bounded. We discuss this extension in Remark \ref{remark:coercive}.

\begin{table}
\caption{Standing assumption: $f$ is coercive and tame.}
\vspace*{-3mm}
\begin{center}
\begin{tabular}{|c|c|c|c|} 
\hline
Algorithm & Our assumption & Literature & Conclusion \\
\hline
\multirow{4}{2.7cm}{Subgradient method with momentum} & \multirow{4}{3cm}{$f$ locally Lipschitz} & 
\multirow{4}{3cm}{$f$ differentiable with locally Lipschitz gradient \cite{zavriev1993heavy,ochs2014ipiano}} & 
\multirow{9}{3.5cm}{$f(x_k)$ and $x_k$ eventually stay arbitrarily close to a critical value and a connected component of the set of critical points respectively, for all initial points in a bounded set $X_0$ and sufficiently small step sizes $\alpha$} 
\\ 
&  &  &\\ 
&  &  &\\ 
&  &  &\\ 
\cline{1-3}
\multirow{4}{3cm}{Random reshuffling with momentum} & \multirow{4}{3cm}{$f_i$ locally Lipschitz and subdifferentially regular} & \multirow{4}{2.5cm}{no results} & \\ 
& & &\\ 
& & &\\ 
& & &\\ 
\cline{1-3}
\multirow{4}{3cm}{Random-permutations cyclic coordinate descent method} & \multirow{4}{3cm}{$f$ continuously differentiable} & \multirow{4}{2.7cm}{$f$ differentiable with locally Lipschitz gradient \cite{beck2013convergence}} & \\ 
& & &\\ 
& & &\\ 
& & &\\
\hline
\end{tabular}
\end{center}
\label{tab:assumption}
\end{table}

Our results rely on the connection between the iterates of first-order methods and the subgradient trajectories of the objective function. The subgradient trajectories of a locally Lipschitz function are solutions to a differential inclusion (i.e., equation \eqref{eq:ivp} with $c = 1$). Previous works used the theory of differential inclusions \cite{aubin1984differential} to study the stochastic subgradient method. Most of them are in the setting where a stochastic subgradient oracle is available (i.e., it generates a subgradient of the objective function in expectation) and the step sizes are diminishing. It was shown that the iterates of stochastic subgradient method converge almost surely to an internally chain transitive set of a differential inclusion \cite[Theorem 3.6]{benaim2005stochastic} \cite[Corollary 4]{borkar2009stochastic}, with the proviso that the iterates are bounded almost surely and that the step sizes are not summable but square summable, among other assumptions. By additionally assuming that the objective function is Whitney stratifiable, the iterates subsequentially converge to critical points and the function values converge to a critical value almost surely \cite[Corollary 5.9]{davis2020stochastic}. 

In contrast to the above works, we are interested in random reshuffling or permutation, which includes sampling without replacement, and constant step sizes. The recent work of Bianchi \textit{et al.} \cite{bianchi2022convergence} uses differential inclusions to analyze the stochastic subgradient method with constant step size. When a stochastic subgradient oracle is available along with other assumptions involving a Markov kernel, the iterates of the stochastic subgradient method eventually lie in the neighborhood of the critical points with high probability \cite[Theorem 3]{bianchi2022convergence} for sufficiently small step sizes. The stochastic subgradient method with random reshuffling and diminishing step sizes was analyzed via differential inclusions recently in the work of Pauwels \cite{pauwels2021incremental}. For Lipschitz continuous objectives, bounded iterates converge subsequentially to the set of critical points with respect to a conservative field \cite[Corollary 6]{pauwels2021incremental}.

We next give the update rules of the first-order methods considered in this paper. Algorithm \ref{alg:mag} is a generalization the framework proposed in the work of Kovachki and Stuart \cite[(7)]{kovachki2021continuous} from differentiable functions to locally Lipschitz functions. We denote by $\partial f$ the Clarke subdifferential \cite{clarke1990} (see Definition \ref{def:Clarke}) of a locally Lipschitz function $f$. Algorithm \ref{alg:mag} reduces to the heavy ball method \cite{polyak1964some} when $\gamma = 0$ and to the Nesterov's accelerated subgradient method \cite[equation (2.2.22)]{nesterov2018introductory} when $\beta = \gamma$ respectively. It also includes the vanilla subgradient method as a special case when $\beta = \gamma= 0$. Algorithm \ref{alg:rrm} is an extension of Algorithm \ref{alg:mag} which exploits the composite nature of the objective function \eqref{eq:obj}. Its update is the same as Algorithm \ref{alg:mag} except that each step concerns only one component $f_i$, which is chosen at a random order at every iteration (epoch). This is exactly how stochastic subgradient method with momentum is implemented in practice (see for e.g., documentations from TensorFlow\footnote{\url{https://www.tensorflow.org/api\_docs/python/tf/keras/optimizers/SGD}}, PyTorch\footnote{\url{https://pytorch.org/docs/stable/generated/torch.optim.SGD.html}} and scikit-learn\footnote{\url{https://scikit-learn.org/stable/modules/sgd.html}}). Last, Algorithm \ref{alg:rcd} is the random-permutations cyclic coordinate descent method, where $\nabla_i f(x):= [\nabla f(x)]_i e_i$, $[\nabla f(x)]_i$ is the $i$\textsuperscript{th} entry of $\nabla f(x)$, and $e_i$ is the $i$\textsuperscript{th} vector in the canonical basis of $\mathbb{R}^n$. Similar to Algorithm \ref{alg:rrm}, Algorithm \ref{alg:rcd} chooses a permutation of all the coordinates at every iteration and cycles through them.

The paper is organized as follows. Section \ref{sec:Literature review} contains a literature review on the first-order methods and stochastic approximations with constant step size. Section \ref{sec:global convergence} contains the global stability results for iterative methods that are approximated by subgradient trajectories. Finally, Section \ref{sec:Continuous-time} explains how the first-order methods fit into the abstract framework of Section \ref{sec:global convergence}.
\begin{algorithm}[ht]
\caption{Subgradient method with momentum}\label{alg:mag}
\begin{algorithmic}
\STATE{\textbf{choose} step size $\alpha>0$, momentum parameters $\beta \in (-1,1)$, $\gamma \in \mathbb{R}$, constant $\delta>0$, $x_{-1},x_0 \in \mathbb{R}^n$ with $\|x_{-1}-x_0\|\leqslant \delta \alpha$}
\FOR{$k = 0,1, \ldots$}
    \STATE{ $y_k = x_k + \gamma (x_k - x_{k-1})$}
    \STATE{ $x_{k+1}  \in x_k + \beta (x_k - x_{k-1})  - \alpha \partial f (y_k)$ } 
\ENDFOR
\end{algorithmic}
\end{algorithm}

\begin{algorithm}[!ht]
\caption{Random reshuffling with momentum}\label{alg:rrm}
\begin{algorithmic}
\STATE{\textbf{choose} step size $\alpha>0$, momentum parameters $\beta \in (-1,1)$, $\gamma \in \mathbb{R}$, constant $\delta>0$, $x_{-1,N-1}, x_0 \in \mathbb{R}^n$ with $\|x_{-1,N-1}- x_0\| \leqslant \delta \alpha$}
\FOR{$k = 0,1, \ldots$}
    \STATE{ $x_{k,0} = x_k$}
    \STATE{$x_{k,-1} = x_{k-1,N-1}$}
    \STATE{choose a permutation $\sigma^k$ of $\{1,2,\ldots, N\}$}
    \FOR{$i = 1,2, \ldots, N$}
        \STATE{ $y_{k,i} = x_{k,i-1} + \gamma (x_{k,i-1} - x_{k,i-2})$}
        \STATE{ $x_{k,i} \in x_{k,i-1} + \beta (x_{k,i-1} - x_{k,i-2})  - \alpha \partial f_{\sigma^k_i} (y_{k,i})$}
    \ENDFOR
    \STATE{ $x_{k+1} = x_{k,N}$}
\ENDFOR
\end{algorithmic}
\end{algorithm}

\begin{algorithm}[ht]
\caption{Random-permutations cyclic coordinate descent method}\label{alg:rcd}
\begin{algorithmic}
\STATE{\textbf{choose}  $x_0 \in \mathbb{R}^n$, step size $\alpha>0$}
\FOR{$k = 0,1, \ldots$}
    \STATE{choose a permutation $\sigma^k$ of $\{1,2,\ldots, n\}$}
    \STATE{ $x_{k,0} = x_k$}
    \FOR{$i = 1,2, \ldots, n$}
        \STATE{ $x_{k,i} = x_{k,i-1} - \alpha \nabla_{\sigma^k_i} f(x_{k,i-1})$}
    \ENDFOR
    \STATE{ $x_{k+1} = x_{k,n}$}
\ENDFOR
\end{algorithmic}
\end{algorithm}

\section{Literature review}
\label{sec:Literature review}

The gradient method with momentum with $\gamma = 0$ was introduced by Polyak \cite{polyak1964some}. It admits a nearly optimal local convergence rate for twice continuously differentiable strongly convex functions \cite[Theorem 9]{polyak1964some}. Nesterov showed that it admits a globally optimal convergence rate \cite[Theorem 2.1.13]{nesterov2018introductory} if one chooses $\beta = \gamma$ in an appropriate manner. With variable momentum parameters, it also has an optimal rate for convex functions with Lipschitz gradients whose infimum is attained \cite{nesterov1983method}. If one relaxes the convexity assumption, then with a suitable choice of parameters $\alpha,\beta,$ and $\gamma$, the gradients $\nabla f(x_k)$ converge to zero \cite[Lemmas 1,2,3]{zavriev1993heavy} for any initial points $x_{-1},x_0 \in \mathbb{R}^n$. If in addition $f$ is coercive and satisfies the Kurdyka-\L{}ojasiewicz inequality \cite{kurdyka1998gradients} at every point and $x_{-1}=x_0$, then the iterates have finite length \cite[Theorem 4.9]{ochs2014ipiano}. In the nonsmooth setting that we consider in this paper, there seems to be no results to the best of our knowledge. 

The incremental subgradient method is a special of the stochastic subgradient method with random reshuffling where the components are visited in a fixed order. It can be traced back to the Widrow-Hoff least mean squares method \cite{widrow1960adaptive} for minimizing a finite sum of convex quadratics in 1960. It was pointed out later by Kohonen that with sufficiently small constant step sizes, the limit points of the iterates of the least mean squares method are close to a minimum of the objective function \cite{kohonen1974adaptive}. With diminishing step sizes that are not summable but square summable, the least mean squares method converges to a minimum of the problem \cite{luo1991convergence}. For convex objectives, the incremental subgradient method with constant step size $\alpha>0$ satisfies $\liminf_{k \rightarrow \infty} f(x_k) \leqslant \inf_{ \mathbb{R}^n} f + C\alpha$ for some $C>0$ \cite[Proposition 2.1]{nedic2001incremental}, provided that $\inf_{\mathbb{R}^n} f>-\infty$ and the subgradients of the components $f_i$ are uniformly bounded. We refer the readers to the survey paper \cite{bertsekas2011incremental}, the textbook \cite{bertsekas2015convex}, and references therein for a more detailed discussion on the subject.

The stochastic subgradient method with random reshuffling is a stochastic version of the incremental subgradient method. It was shown recently that the stochastic gradient method with random reshuffling outperforms the incremental gradient method in expectation on strongly convex functions with quadratic components \cite[Theorem 2]{gurbuzbalaban2021random}, under certain choices of diminishing step sizes. If the objective function is strongly convex and differentiable with Lipschitz gradients among other assumptions, then the iterates and the corresponding function values of the stochastic gradient method with random reshuffling and constant step size eventually lie in a neighborhood of the minimizer \cite[Theorem 1]{mishchenko2020random} and a neighborhood of the minimum \cite[Theorem 1]{nguyen2021unified} respectively, both in expectation. By relaxing the strong convexity assumption to mere convexity, the function values evaluated at the average iterates $\hat{x}_k := (\sum_{l = 0}^k x_l)/k$ eventually lie in a neighborhood of the minimum in expectation \cite[Theorem 3]{mishchenko2020random} \cite[Remark 1]{nguyen2021unified}. By further removing the convexity assumption, the minimum norm of the gradients eventually lies in a neighborhood of zero in expectation \cite[Theorem 4]{mishchenko2020random} \cite[Corollary 1, Corollary 3]{pauwels2021incremental} (see also \cite[Theorem 4]{nguyen2021unified} for a similar result). The long-term behavior of the iterates for nonconvex and nonsmooth objective functions has so far remained elusive.

Despite the empirical success of incorporating momentum into the incremental gradient method/stochastic gradient method with random reshuffling \cite{sutskever2013importance}, the theoretical understanding of such methods is limited. So far, the only guarantees available are for modified versions \cite{tran2021smg,tran2022nesterov}. The work of Tran \textit{et al.} \cite{tran2021smg} in 2021 studied a modified version of stochastic gradient method with random reshuffling and heavy ball. The momentum is constant within every iteration (epoch) and is equal to the average of the gradients evaluated in the previous epoch. With the modification, the norm of gradients of the average iterates $\hat{x}_k$ eventually lie in a neighborhood of zero in expectation \cite[Corollary 1]{tran2021smg}, under various assumptions \cite[Assumption 1]{tran2021smg}. A modified stochastic gradient method with random reshuffling and Nesterov's momentum was studied recently \cite{tran2022nesterov}. The momentum is only applied at the level of the outer loop, at the end of each iteration (epoch). In this setting, the function values eventually lie a neighborhood of the minimum when the component functions are convex \cite[Theorem 1]{tran2022nesterov}, among other assumptions.

Coordinate descent methods are the object of the survey paper \cite{wright2015coordinate} by Wright in 2015. The idea of coordinate descent methods is to optimize with respect to one variable at a time. It was first studied under the framework of univariate relaxation \cite[Section 14.6]{ortega1970}. With exact line search and almost cyclic rule or Gauss-Southwell rule for cycling over the coordinates, the coordinate descent method converges linearly to a minimizer of a strongly convex objective that is twice differentiable \cite[Theorem 2.1]{luo1992convergence}. More recently, global convergence of random coordinate descent method was established for convex objectives with Lipschitz continuous partial derivatives \cite{nesterov2012efficiency}. In contrast to cyclic coordinate descent methods, random coordinate descent methods choose a coordinate randomly at each iteration instead of following a cycling rule. Similar to the stochastic subgradient method with random reshuffling, the random-permutations cyclic coordinate descent method considered in this work is easier to implement than the random coordinate descent method as it requires only sequential access of the data \cite{gurbuzbalaban2020randomness}. Using \cite[Lemma 3.3, remark 3.2]{beck2013convergence}, the convergence of the random-permutations cyclic coordinate descent method can be deduced for coercive functions with locally Lipschitz gradients. The superior performance of the random-permutations cyclic coordinate descent method was observed in numerical experiments, and was supported by analysis for convex quadratic objectives \cite{lee2019random,gurbuzbalaban2020randomness}. For objective functions without a locally Lipschitz gradient, the study of the method appears to be absent from the literature.

Stochastic approximations of differential inclusions with constant step size have led to recent advances on an oracle-based stochastic subgradient method \cite{bianchi2022convergence}. Given differential equations with Lipschitz right-hand sides over a finite time horizon, Kurtz proposed a sequence of discrete time stochastic processes that approaches their solutions with a probability that goes to one \cite[Theorem (4.7)]{kurtz1970solutions} (see also \cite[Proposition 3.1]{benaim1998recursive}). Over an infinite time horizon, the sequence of corresponding invariant measures concentrates around the Birkhoff center of the differential equations \cite[Corollary 3.2]{benaim1998recursive}. Later, Roth and Sandholm \cite{roth2013stochastic} extended these results to differential inclusions with upper semicontinuous right-hand sides and compact supports, along with other assumptions. More recently, the work of Bianchi \emph{et al.} \cite{bianchi2019constant} studied stochastic approximation with constant step size under a different set of assumptions, relaxing the compact support assumption from the previous literature. We refer the readers to the textbooks on Markov processes and stochastic approximations \cite{ethier2009markov,benveniste2012adaptive} for more references on the subject. Although the above results cannot be directly 
 applied to the settings of this work, readers will see that we adopt similar proof strategies when studying the relationship between the discrete and continuous dynamics. For example, we also study the subsequential convergence of the linear interpolation of the iterates to the solutions of a continuous-time system. More discussion on this matter is deferred to the following section in Remark \ref{remark:dontchev}. In addition, our analysis relies on the theory of set-valued analysis \cite{clarke1990} and differential inclusion \cite{aubin1984differential}, which was also used in the aforementioned literature.

\section{Global stability of first-order methods}
\label{sec:global convergence}
We refer to an iterative method with constant step size as a set-valued mapping $\mathcal{M}: \mathbb{R}^{(\mathbb{R}^n)} \times (0,\infty) \times 2^{(\mathbb{R}^n)} \times \mathbb{N}\rightrightarrows (\mathbb{R}^n)^\mathbb{N}$ which, to an objective function $f:\mathbb{R}^n \rightarrow \mathbb{R}$, a constant step size $\alpha \in (0,\infty)$, a set $X_0\subset \mathbb{R}^n$, and a natural number $\bar{k}$ associates a set of sequences in $\mathbb{R}^n$ whose $\bar{k}$\textsuperscript{th} term is contained in $X_0$.

We next introduce several definitions. Let $\|\cdot\|$ be the induced norm of an inner product $\langle \cdot, \cdot\rangle$ on $\mathbb{R}^n$. Given a subset $S$ of $\mathbb{R}^n$ and $x\in \mathbb{R}^n$, consider the distance of $x$ to $S$ defined by $d(x,S) := \inf \{ \|x-y\| : y \in S \}$. Let $B(a,r)$ denote the closed ball of center $a\in \mathbb{R}^n$ and radius $r > 0$, and let $B(S,r):= \cup_{a \in S} B(a,r)$ where $S \subset \mathbb{R}^n$. Recall that a function $f:\mathbb{R}^n\rightarrow\mathbb{R}^m$ is locally Lipschitz if for all $a \in \mathbb{R}^n$, there exist $r>0$ and $L>0$ such that 
$\|f(x)-f(y)\| \leqslant L \|x-y\|$ for all $x,y \in B(a,r)$. We use $[f \leqslant \Delta]:= \{x\in \mathbb{R}^n: f(x) \leqslant \Delta\}$ to denote a sublevel set of a function $f:\mathbb{R}^n \rightarrow \mathbb{R}$ where $\Delta \in \mathbb{R}$.
A function $f:\mathbb{R}^n\rightarrow\mathbb{R}$ is coercive if $\lim_{\|x\|\rightarrow \infty} f(x) =\infty$.

\begin{definition}{\cite[Chapter 2]{clarke1990}}
    \label{def:Clarke}
    Let $f:\mathbb{R}^n \rightarrow \mathbb{R}$ be a locally Lipschitz function. The Clarke subdifferential is the set-valued mapping $\partial f:\mathbb{R}^n\rightrightarrows\mathbb{R}^n$ defined for all $x \in \mathbb{R}^n$ by $\partial f(x) := \{ s \in \mathbb{R}^n : f^\circ(x,d) \geqslant \langle s , d \rangle, \forall d\in \mathbb{R}^n \}$ where
\begin{equation*}
    f^\circ(x,d) := \limsup_{\tiny\begin{array}{c} y\rightarrow x \\
    t \searrow 0
    \end{array}
    } \frac{f(y+td)-f(y)}{t}.
\end{equation*}
\end{definition}

We say that $x \in \mathbb{R}^n$ is critical if $0 \in \partial f(x)$, and that $v \in \mathbb{R}$ is a critical value if there exists $x\in \mathbb{R}^n$ such that $0 \in \partial f(x)$ and $v = f(x)$. If $f$ is continuously differentiable, then $\partial f(x) = \{\nabla f(x)\}$ \cite[2.2.4 Proposition]{clarke1990}.

\begin{definition}
    \label{def:absolutely_continuous} 
    \cite[Definition 1 p. 12]{aubin1984differential}
    Given some real numbers $a$ and $b$ such that $a<b$, a function $x(\cdot)$ defined from $[a,b]$ to $\mathbb{R}^n$ is absolutely continuous if for all $\epsilon>0$, there exists $\delta>0$ such that, for any finite collection of disjoint subintervals $[a_1,b_1],\hdots,[a_m,b_m]$ of $[a,b]$ such that $\sum_{i=1}^m b_i-a_i \leqslant \delta$, we have $\sum_{i=1}^m \|x(b_i) - x(a_i)\| \leqslant \epsilon$.
\end{definition}
By virtue of \cite[Theorem 20.8]{nielsen1997introduction}, a function $x:[a,b]\rightarrow\mathbb{R}^n$ is absolutely continuous if and only if it is differentiable almost everywhere on $(a,b)$, its derivative $x'(\cdot)$ is Lebesgue integrable, and $\forall t\in [a,b], x(t) - x(a) = \int_a^t x'(t)dt$.

The next definition is inspired by a series of works that resort to continuous-time dynamics to analyze discrete-time dynamics. The idea that discrete dynamics resemble their continuous counterpart dates back to Euler \cite{euler1792institutiones,blanton2006foundations}. He proposed discretizing ordinary differential equations to find approximate solutions. This technique is also used to prove the existence of solutions via the Cauchy Peano theorem \cite[Theorem 1.2]{coddington1955theory}. Ljung \cite{ljung1977analysis} and Kushner \cite{kushner1977general,kushner1977convergence} established a connection between the asymptotic behavior of discrete and continuous dynamics with noise, which is particularly useful when they are governed by conservative fields. Benaïm \textit{et al.} \cite{benaim2005stochastic,benaim2006stochastic,benaim2006dynamics} strengthened this connection by relaxing some assumptions and incorporating set-valued dynamics. Due to its importance in analyzing optimization algorithms in recent years \cite{borkar2009stochastic,duchi2018stochastic,davis2020stochastic,bolte2020conservative,salim2018random,pauwels2021incremental}, we elaborate on their contribution.

Benaïm, Hofbauer, and Sorin consider a closed set-valued mapping $F:\mathbb{R}^n \rightrightarrows \mathbb{R}^n$ with nonempty convex compact values for which there exists $C>0$ such that $\sup \{ \|s\| : s \in F(x)\} \leqslant C(1+\|x\|)$ for all $x\in \mathbb{R}^n$. They show that discrete trajectories of $F$ can be approximated by its continuous trajectories in the following sense. Let $(x_k)_{k\in \mathbb{N}}$ be a bounded sequence such that $x_{k+1} \in x_k + \alpha_k F(x_k)$ for all $k\in\mathbb{N}$ where $\alpha_k>0$, $\sum_{k=0}^\infty \alpha_k = \infty$, and $\sum_{k=0}^\infty \alpha_k^2 < \infty$ (Ljung and Kushner also assume this, following Robbins and Monro \cite{robbins1951stochastic}). Let $t_0 := 0$ and $t_k := \alpha_0+\hdots+\alpha_{k-1}$ for $k\geqslant 1$. Consider the linear interpolation defined by 
\begin{equation*}
    x(t) := x_k + \frac{t-t_k}{t_{k+1}-t_k}(x_{k+1}-x_k), ~~~\forall t\in[t_k,t_{k+1})
\end{equation*}
as well as the time shifted interpolations $x^{\tau}(\cdot) := x(\tau + \cdot)$ where $\tau\geqslant 0$. The key insight is that for any sequence $\tau_k\rightarrow \infty$, the shifted interpolations $x^{\tau_k}$ subsequentially converge to a solution to the differential inclusion $x'(t) \in F(x(t))$ for almost every $t>0$ in the topology of uniform convergence on compact intervals \cite[Theorem 4.2]{benaim2005stochastic}. When one specifies that $F:= -\partial f$ where $f:\mathbb{R}^n\rightarrow\mathbb{R}$ is a locally Lipschitz function, this can used to derive asymptotic properties of some important algorithms in optimization.

In this work, we are interested in the constant step size regime for which it is hopeless to try to establish uniform convergence over compact intervals (for a fixed discrete trajectory, as above). We thus ask for something weaker from the algorithms we analyze, namely, that the continuous and discrete time dynamics are close in uniform norm up to a certain time. We ask that this holds for a set of time shifted trajectories to account for multistep methods. We next give a precise meaning to this notion. We will use $\lfloor t \rfloor$ to denote the floor of a real number $t$ which is the unique integer such that $\lfloor t \rfloor \leqslant t < \lfloor t \rfloor + 1$.

\begin{definition}
\label{def:approx_flow_new}
An iterative method $\mathcal{M}$ is approximated by subgradient trajectories of a locally Lipschitz function $f:\mathbb{R}^n \rightarrow \mathbb{R}$ (up to a positive multiplicative constant) if there exists $c>0$ such that for any compact sets $X_0, X_1 \subset \mathbb{R}^n$, there exists $T>0$ such that for all $\epsilon>0$, there exists $\bar{\alpha}>0$ such that for all $\alpha \in (0,\bar{\alpha}]$, $\bar{k} \in \mathbb{N}$, and $(x_k)_{k \in \mathbb{N}} \in \mathcal{M}(f,\alpha,X_0,\bar{k})$ for which $x_0, \ldots, x_{\bar{k}} \in X_1$, there exists an absolutely continuous function $x:[0,T]\rightarrow \mathbb{R}^n$ such that
\begin{equation}
    \label{eq:ivp}
    x'(t) \in -c\partial f(x(t)),~~~\text{for almost every}~t\in [0,T],~~~x(0)\in X_0,
\end{equation}
and $\|x_{k}-x((k-\bar{k})\alpha)\|\leqslant \epsilon$ for $k=\bar{k},\hdots,\bar{k}+\lfloor T/\alpha\rfloor$.
\end{definition}

\begin{remark}
\label{remark:dontchev}
In the next section, we show that Algorithms \ref{alg:mag}, \ref{alg:rrm}, and \ref{alg:rcd} satisfy Definition \ref{def:approx_flow_new} (see Propositions \ref{prop:rrm_flow} and \ref{prop:cd_flow}). In order to do so, we always use the same strategy which consists in taking sequences generated by a given method with smaller and smaller constant step size, and show that a subsequence of their linear interpolations converges uniformly to a subgradient trajectory up to a finite time.

Several discretization methods of initial value problems with differential inclusions were studied in \cite{taubert1981converging,aubin1984differential,clarke1990,filippov2013differential} (see also a survey on the subject by Dontchev and Lempio \cite{dontchev1992difference}). Assume that the set-valued mapping underlying the differential inclusion is upper semicontinuous with nonempty compact convex values, such that the norm of their elements are upper bounded by a linear function of the norm of the argument. Then over any finite time horizon, a subsequence of linear interpolations of the Euler method with smaller and smaller step sizes converges uniformly to a solution to the initial value problem \cite[Theorem 2.2]{dontchev1992difference}. If in addition the set-valued mapping is bounded, then a class of linear multistep methods has the same convergence property as above \cite[p. 127, Theorem]{taubert1981converging} (see also \cite[Convergence Theorem 3.2]{dontchev1992difference}). We build on the techniques developed in the above works when checking Definition \ref{def:approx_flow_new}. We adapt them so that they can handle the case where $\partial f$
is not accessible (as in Algorithms \ref{alg:rrm} and \ref{alg:rcd}) and the set of initial points is a compact set. 
\end{remark}

The class of locally Lipschitz functions is too broad to obtain any meaningful results on the first-order methods \cite{rios2022examples,daniilidis2020pathological}. We thus consider functions that are tame in o-minimal structures. O-minimal structures (short for order-minimal) were originally considered by van den Dries, Pillay and Steinhorn \cite{van1984remarks,pillay1986definable}. They are founded on the observation that many properties of semi-algebraic sets can be deduced from a few simple axioms \cite{van1998tame}. Recall that a subset $A$ of $\mathbb{R}^n$ is semi-algebraic \cite{bochnak2013real} if it is a finite union of basic semi-algebraic sets, which are of the form $\{ x \in \mathbb{R}^n : p_i(x) = 0, ~ i = 1,\hdots,k; ~  p_i(x) > 0, ~ i = k+1,\hdots,m \}$ where $p_1,\hdots,p_m \in \mathbb{R}[X_1,\hdots,X_n]$ (i.e., polynomials with real coefficients). 

\begin{definition}\cite[Definition p. 503-506]{van1996geometric}
\label{def:o-minimal}
An o-minimal structure on the real field is a sequence $S = (S_k)_{k \in \mathbb{N}}$ such that for all $k \in \mathbb{N}$:
\begin{enumerate}
    \item $S_k$ is a boolean algebra of subsets of $\mathbb{R}^k$, with $\mathbb{R}^k \in S_k$;
    \item $S_k$ contains the diagonal $\{(x_1,\hdots,x_k) \in \mathbb{R}^k : x_i = x_j\}$ for $1\leqslant i<j \leqslant k$;
\item If $A\in S_k$, then $A\times \mathbb{R}$ and $\mathbb{R}\times A$ belong to $S_{k+1}$;
    \item If $A \in S_{k+1}$ and $\pi:\mathbb{R}^{k+1}\rightarrow\mathbb{R}^k$ is the projection onto the first $k$ coordinates, then $\pi(A) \in S_k$;
    \item $S_3$ contains the graphs of addition and multiplication;
    \item $S_1$ consists exactly of the finite unions of open intervals and singletons. 
\end{enumerate}
\end{definition}

Note that $S_1$ are the semi-algebraic subsets of $\mathbb{R}$ and by \cite[2.5 Examples (3)]{van1996geometric}, $S_k$ contains the semi-algebraic subsets of $\mathbb{R}^k$. A subset $A$ of $\mathbb{R}^n$ is definable in an o-minimal structure $(S_k)_{k\in\mathbb{N}}$ if $A \in S_n$. A function $f:\mathbb{R}^n\rightarrow\mathbb{R}$ is definable in an o-minimal structure if its graph, that is to say $\{(x,t) \in \mathbb{R}^{n+1} : f(x)=t \}$, is definable in that structure. A set $C \subset \mathbb{R}^n$ is tame \cite{ioffe2009} in an o-minimal structure $(S_k)_{k\in \mathbb{N}}$ if
    $$
    \forall x\in \mathbb{R}^{n}, ~ \forall r>0, ~~~ C \cap B(x,r) \in S_{n}.
    $$
    and a function $f:\mathbb{R}^n\rightarrow \mathbb{R}$ is tame if its graph is tame. With the above definitions, we are now ready to state two technical lemmas. The first relates a uniform neighborhood of a sublevel set with another sublevel set. The second is analogous to the descent lemma for smooth functions \cite[Lemma 1.2.3]{nesterov2018introductory} \cite[Lemma 5.7]{beck2017first}.

\begin{lemma}
\label{lemma:containment}
    Let $f:\mathbb{R}^n \rightarrow \mathbb{R}$ be a locally Lipschitz function. Let $\Delta \in \mathbb{R}$ and let $L>0$ be a Lipschitz constant of $f$ in $[f\leqslant \Delta]$. For any $\epsilon'>0$, $B([f \leqslant \Delta - \epsilon'L], \epsilon') \subset [f \leqslant \Delta]$.
\end{lemma}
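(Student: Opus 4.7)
The statement says that enlarging a slightly lower sublevel set by a ball of radius $\epsilon'$ stays inside the target sublevel set. The natural strategy is to connect an arbitrary point of the enlarged set to a witness in $[f\leqslant\Delta-\epsilon'L]$ by a straight line segment and apply the Lipschitz estimate along it. The only delicate point is that the hypothesis supplies a Lipschitz constant only on the set $[f\leqslant\Delta]$, so the bound can be used between two given points only after one has verified that both lie in that sublevel set. I will handle this via a ``first exit time'' argument.

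Fix $x\in B([f\leqslant\Delta-\epsilon'L],\epsilon')$ and pick $y$ with $f(y)\leqslant\Delta-\epsilon'L$ and $\|x-y\|\leqslant\epsilon'$. Consider the segment $\gamma(t):=y+t(x-y)$ for $t\in[0,1]$ and let
\[
A:=\{\,t\in[0,1] : f(\gamma(s))\leqslant\Delta\ \text{for all } s\in[0,t]\,\},\qquad T:=\sup A.
\]
Since $f(\gamma(0))=f(y)\leqslant\Delta-\epsilon'L\leqslant\Delta$, we have $0\in A$, so $T$ is well-defined. By continuity of $f\circ\gamma$, the condition $f(\gamma(s))\leqslant\Delta$ for all $s<T$ passes to the limit, giving $f(\gamma(T))\leqslant\Delta$; hence $A=[0,T]$ and $\gamma(T)\in[f\leqslant\Delta]$.

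Now I apply the Lipschitz hypothesis to the two points $\gamma(0)=y$ and $\gamma(T)$, both of which lie in $[f\leqslant\Delta]$:
\[
f(\gamma(T))\;\leqslant\; f(y)+L\,\|\gamma(T)-\gamma(0)\|\;\leqslant\;(\Delta-\epsilon'L)+L\cdot T\|x-y\|\;\leqslant\;\Delta-(1-T)\epsilon'L.
\]
If $T<1$, this gives $f(\gamma(T))<\Delta$ strictly, so by continuity of $f\circ\gamma$ at $T$ there exists $\eta>0$ with $f(\gamma(t))<\Delta$ for all $t\in[T,T+\eta]\cap[0,1]$, contradicting the definition of $T$ as the supremum of $A$. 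Therefore $T=1$, and $f(x)=f(\gamma(1))\leqslant\Delta$.

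The main obstacle is precisely the subtlety just addressed: the Lipschitz constant is assumed only on the sublevel set, not on an open neighborhood, so one cannot directly bound $f(x)-f(y)$ by $L\|x-y\|$. The exit-time argument is what lets the estimate propagate along the whole segment. Everything else is a routine continuity and algebra check.
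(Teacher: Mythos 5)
Your proof is correct and follows essentially the same route as the paper's: both arguments travel along the segment from the witness point $y$ and apply the two-point Lipschitz estimate at the first point where the segment could leave $[f\leqslant\Delta]$, deriving a contradiction from $\epsilon'L\leqslant L\|c-y\|<\epsilon'L$ (the paper locates that point via the intermediate value theorem, you via a supremum/exit-time argument, which is the same idea). No gaps.
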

\begin{proof}
We show that $B(a,\epsilon') \subset [f\leqslant \Delta]$ for all $a \in [f\leqslant \Delta - \epsilon' L]$. Indeed, if $b \in B(a,\epsilon') \setminus [f\leqslant \Delta]$, then there exists $c$ in the segment $[a,b)$ such that $f(c) = \Delta$ and $ \epsilon' L = \Delta -(\Delta - \epsilon' L) \leqslant f(c)-f(a) \leqslant L\|c-a\| < \epsilon'L$. 
\end{proof}

\begin{lemma}
\label{lemma:decrease}
Let $f:\mathbb{R}^n \rightarrow \mathbb{R}$ be a locally Lipschitz tame function and let $\mathcal{M}$ be an iterative method with constant step size. Let $X\subset \mathbb{R}^n$ and $L$ be a Lipschitz constant of $f$ on $X$. For all $T, \epsilon', \alpha, c > 0$, $\bar{k} \in \mathbb{N}$, $(x_k)_{k\in \mathbb{N}} \in (\mathbb{R}^n)^\mathbb{N}$, and for any subgradient trajectory $x:[0,T]\rightarrow \mathbb{R}^n$ of $cf$ such that $x([0,T]) \subset X$, $x_k \in X$, and $\|x_k - x(\alpha (k-\bar{k}))\|\leqslant \epsilon'$ for $k = \bar{k}, \ldots, \bar{k}+\lfloor T/\alpha\rfloor$, we have
	 \begin{equation*}
	     f(x_k) \leqslant f(x((k - \bar{k})\alpha)) + \epsilon' L \leqslant f(x_{\bar{k}})- c\int_0^{(k - \bar{k})\alpha} d(0,\partial f (x(s)))^2~ds + 2\epsilon' L
	 \end{equation*}
	 for $k = \bar{k},\ldots, \bar{k} + \lfloor T/\alpha\rfloor$.
\end{lemma}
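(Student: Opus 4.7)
The first inequality is immediate: since both $x_k$ and $x((k-\bar{k})\alpha)$ lie in $X$, on which $f$ is $L$-Lipschitz, the hypothesis $\|x_k - x((k-\bar{k})\alpha)\| \leqslant \epsilon'$ gives $f(x_k) - f(x((k-\bar{k})\alpha)) \leqslant L\epsilon'$. The substance of the lemma is therefore the second inequality, which controls the decrease of $f$ along the continuous trajectory $x(\cdot)$.

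The plan is to apply the chain rule for locally Lipschitz tame (equivalently, definable) functions to the composition $t \mapsto f(x(t))$. Because $f$ is tame and $x(\cdot)$ is absolutely continuous, $f \circ x$ is absolutely continuous on $[0,T]$, and one has the projection formula
\begin{equation*}
(f\circ x)'(t) = \langle v, x'(t)\rangle \quad \text{for every } v \in \partial f(x(t)),
\end{equation*}
for almost every $t \in [0,T]$. This chain rule is the key structural ingredient; it is the standard consequence of the Whitney stratification of the graph of $f$ and I would invoke it as a known fact (as is done throughout the tame optimization literature).

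Now fix a measurable selection $g(t) \in \partial f(x(t))$ realizing the inclusion $x'(t) = -c\,g(t)$ almost everywhere. Applying the chain rule with $v = g(t)$ yields $(f\circ x)'(t) = -c\|g(t)\|^2$, whereas applying it with $v = v^\star(t)$, the minimum-norm element of $\partial f(x(t))$, gives $(f\circ x)'(t) = -c\langle g(t), v^\star(t)\rangle$. Equating the two and invoking Cauchy--Schwarz forces $\|g(t)\| \leqslant \|v^\star(t)\| = d(0,\partial f(x(t)))$; the reverse inequality is automatic since $g(t) \in \partial f(x(t))$. Thus $\|g(t)\| = d(0,\partial f(x(t)))$ almost everywhere, and
\begin{equation*}
(f\circ x)'(t) = -c\,d(0,\partial f(x(t)))^2 \quad \text{for a.e. } t \in [0,T].
\end{equation*}

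Integrating this identity from $0$ to $(k-\bar{k})\alpha$ and using absolute continuity of $f\circ x$ gives
\begin{equation*}
f(x((k-\bar{k})\alpha)) = f(x(0)) - c\int_0^{(k-\bar{k})\alpha} d(0,\partial f(x(s)))^2\, ds.
\end{equation*}
Finally, since $x(0), x_{\bar{k}} \in X$ and $\|x_{\bar{k}} - x(0)\| \leqslant \epsilon'$, the $L$-Lipschitz bound on $X$ gives $f(x(0)) \leqslant f(x_{\bar{k}}) + L\epsilon'$, and combining this with the first inequality yields the claim. The only real obstacle is the chain rule step, which is specific to the tame setting; once it is in hand, everything else is Lipschitz estimates and integration.
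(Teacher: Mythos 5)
Your proof is correct and follows essentially the same route as the paper: the outer Lipschitz estimates are identical, and the key identity $f(x(0)) - f(x(t)) = c\int_0^t d(0,\partial f(x(s)))^2\,ds$ that you derive from the projection/chain-rule formula for tame functions is precisely what the paper imports wholesale by citing \cite[Lemma 5.2, Theorem 5.8]{davis2020stochastic}. Your unpacking of that citation (using Cauchy--Schwarz to show the realized selection has minimal norm) is accurate but is exactly the argument behind the cited theorem, so there is no substantive difference in approach.
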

\begin{proof}
    For $k=\bar{k},\hdots,\bar{k}+\lfloor T/\alpha\rfloor$, we have
	 \begin{subequations}
     \label{eq:fxk}
    	\begin{align}
		f(x_k) &\leqslant f(x((k - \bar{k})\alpha)) + \epsilon' L \label{eq:fxk_a}\\[2mm]
		&= f(x(0)) - (f(x(0)) - f(x((k - \bar{k})\alpha))) + \epsilon' L\label{eq:fxk_b}\\[2mm]
     &\leqslant f(x_0) - (f(x(0)) - f(x((k - \bar{k})\alpha))) + 2\epsilon' L\label{eq:fxk_c}\\[1mm]
		&= f(x_0)- c\int_0^{(k - \bar{k})\alpha} d(0,\partial f (x(s)))^2~ds + 2\epsilon' L. \label{eq:fxk_d}
    	\end{align}
     \end{subequations}
	 In \eqref{eq:fxk_a} and \eqref{eq:fxk_c}, we invoke the Lipschitz constant $L$ of $f$ on $X \ni x((k - \bar{k})\alpha),x_k$. \eqref{eq:fxk_d} is a consequence of \cite[Lemma 5.2, Theorem 5.8]{davis2020stochastic} (see also \cite{drusvyatskiy2015curves}).
\end{proof}

We now turn to our main results, namely Theorem \ref{thm:converge} and Corollary \ref{cor:converge_critical}, which we prove using Lemmas \ref{lemma:containment} and \ref{lemma:decrease}.

\begin{theorem}[Stability of function values]
\label{thm:converge}
	Let $f:\mathbb{R}^n \rightarrow \mathbb{R}$ be a locally Lipschitz coercive tame function and let $\mathcal{M}$ be an iterative method with constant step size that is approximated by subgradient trajectories of $f$. For any bounded set $X_0 \subset \mathbb{R}^n$ and $\epsilon>0$, there exist $\bar{\alpha}, \Delta>0$ such that for all $(x_k)_{k \in \mathbb{N}} \in \mathcal{M}(f,(0,\bar{\alpha}],X_0,0)$, we have $f(x_k) \leqslant \Delta$ for all $k \in \mathbb{N}$ and there exist a critical value $f^*$ of $f$ and $k_0 \in \mathbb{N}$ such that $ |f(x_k) - f^*| \leqslant \epsilon$ for all $k \geqslant k_0$.
\end{theorem}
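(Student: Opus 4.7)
The plan is to combine the descent of continuous subgradient trajectories with the finiteness of critical values granted by tameness, and to transfer these properties to the discrete iterates via Lemma \ref{lemma:decrease}. I partition the iterations into successive windows of length $N := \lfloor T/\alpha \rfloor$, where $T$ is the time horizon from Definition \ref{def:approx_flow_new}.

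I first set up the following compact objects. Since $X_0$ is bounded and $f$ is continuous, $\Delta_0 := \sup_{\overline{X_0}} f$ is finite. Pick $\Delta := \Delta_0 + \epsilon$ and set $X_1 := [f \leq \Delta + 1]$, which is compact by coercivity. Let $L$ be a Lipschitz constant of $f$ on $B(X_1, 1)$, and apply Definition \ref{def:approx_flow_new} to the pair $(\overline{X_0}, X_1)$ to extract $T > 0$ and the constant $c > 0$. Using tameness (cell decomposition and the monotonicity theorem), the set of critical values of $f$ lying in $[\inf_{X_1} f, \Delta+1]$ is finite, enumerated as $v_1 < \cdots < v_M$. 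I choose $\epsilon_0 > 0$ strictly less than $\min(\epsilon/4,\, \min_j(v_{j+1} - v_j)/8)$. A standard compactness and closed-graph argument for the Clarke subdifferential then yields $\eta > 0$ such that $d(0, \partial f(x)) \geq \eta$ whenever $x \in X_1$ and $f(x) \notin \bigcup_j (v_j - \epsilon_0, v_j + \epsilon_0)$.

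With these in hand, the heart of the proof is a window dichotomy. Fix $\eta_1 := \min(cT\eta^2/2,\, \epsilon/8)$ and pick $\epsilon' > 0$ small enough that $\epsilon' L < \min(\eta_1/4,\, \epsilon_0)$; Definition \ref{def:approx_flow_new} then supplies $\bar\alpha$. For a window starting at index $\bar k$ with $x_0, \ldots, x_{\bar k} \in X_1$, let $x(\cdot):[0,T] \to \mathbb{R}^n$ be the attached subgradient trajectory, so that $f(x(T)) - f(x(0)) = -c \int_0^T d(0, \partial f(x(s)))^2\, ds$. Either (a) $f(x(T)) \leq f(x(0)) - \eta_1$, in which case Lemma \ref{lemma:decrease} yields $f(x_{\bar k + N}) \leq f(x_{\bar k}) - \eta_1/2$; or (b) this fails, forcing some $s^* \in [0, T]$ with $d(0, \partial f(x(s^*))) < \eta$, whence $f(x(s^*)) \in (v_j - \epsilon_0, v_j + \epsilon_0)$ for some $j$, and the monotonicity of $f \circ x$ confines $f(x(t))$ to the interval $[v_j - \epsilon_0 - \eta_1,\, v_j + \epsilon_0 + \eta_1]$ for every $t \in [0, T]$. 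Via the tracking estimate $|f(x_k) - f(x((k-\bar k)\alpha))| \leq \epsilon' L$, $f(x_k)$ then lies within $\epsilon_0 + \eta_1 + \epsilon' L < \epsilon/2$ of $v_j$ for every $k$ in the window.

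I then assemble the windows. Because $f$ is bounded below on $X_1$ by coercivity, case (a) can occur only finitely many times, so past some window every iteration falls in case (b). Monotonicity of $f \circ x$ across consecutive case-(b) windows, combined with the gap hypothesis on $\epsilon_0$ and $\epsilon' L$, forces the associated sequence of critical values $v_{j(\ell)}$ to be non-increasing, and hence, by finiteness, eventually constant at some $v^* =: f^*$. This provides the desired $k_0$ after which $|f(x_k) - f^*| < \epsilon$; a concurrent inductive check (using Lemma \ref{lemma:containment} to propagate the sublevel inclusion across windows) shows that $f(x_k) \leq \Delta$ throughout. The main obstacle I anticipate is preventing the $O(\epsilon' L)$ shadowing error from accumulating across infinitely many windows; the resolution is precisely the use of finiteness and positive gaps between critical values (from tameness), together with the monotonicity of $f$ along continuous subgradient trajectories, which forces the critical-value labels of consecutive windows to be monotone and therefore eventually stationary.
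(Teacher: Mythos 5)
Your architecture closely parallels the paper's proof: finiteness of critical values from tameness, a compactness-based lower bound $\eta$ on $d(0,\partial f)$ away from the bands around critical values (playing the role of the paper's constant $M$), the per-window dichotomy between guaranteed decrease and anchoring near a critical value, and transfer to the iterates via Lemma \ref{lemma:decrease}. Dispensing with the Kurdyka-\L{}ojasiewicz inequality in favor of the cruder bound $\eta$ is in itself defensible. The genuine gap is in the assembly step: the claim that ``case (a) can occur only finitely many times'' because $f$ is bounded below does not follow. Boundedness below only controls a telescoping sum of decreases, whereas each case-(b) window can \emph{increase} the value at the window boundary by up to $2\epsilon' L$ (the trajectory is nonincreasing, but the iterates track it only within $\epsilon'$, so all you get is $f(x_{\bar k + N})\leqslant f(x_{\bar k})+2\epsilon' L$). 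Hence the decreases of $\eta_1/2$ produced by case-(a) windows can be repeatedly cancelled by runs of case-(b) windows, and nothing in your argument rules out infinitely many case-(a) windows interleaved among case-(b) windows. If case-(a) windows recur forever, your conclusion covers only the iterates lying in case-(b) windows; the iterates inside recurring case-(a) windows are not anchored, and you would need an additional argument (bounding the length of each consecutive run of case-(a) windows after the labels stabilize, and sandwiching the values in such a run between the anchors of the surrounding case-(b) windows, with the accumulated $O(\epsilon' L)$ error controlled by that length bound) to cover them. The paper sidesteps this: it uses boundedness below only to terminate a \emph{consecutive} run of decreasing windows, and then proves by a one-step induction that $f(x_k)\leqslant f_i+\epsilon$ for \emph{all} subsequent $k$, where the induction step absorbs the $\epsilon' L$ creep by exhibiting a guaranteed decrease of order $cT/\psi'(\epsilon/2)^2$ whenever the trajectory hovers in $[f_i+\epsilon/2,f_i+\epsilon]$ (the paper gets this from Kurdyka-\L{}ojasiewicz, though your $\eta$ would serve there too).

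Two secondary points. First, you invoke Definition \ref{def:approx_flow_new} only for the pair $(\overline{X_0},X_1)$, but the definition requires the starting iterate $x_{\bar k}$ of each window to lie in the first argument; for $\bar k>0$ there is no reason for $x_{\bar k}$ to lie in $\overline{X_0}$, so the approximation is unavailable for every window after the first. You must apply the definition with the first argument equal to a compact set provably containing all iterates (the paper uses $[f\leqslant\Delta/2]$ together with the bands $[|f-f_i|\leqslant\epsilon]$). Second, your dichotomy is stated in terms of $\int_0^{T}$, but the iterates only track the trajectory up to time $K\alpha$ with $K=\lfloor T/\alpha\rfloor$, so the decrease transferred to $f(x_{\bar k+N})$ in case (a) is governed by $\int_0^{K\alpha}$; this is fixable since $K\alpha\geqslant T/2$, but as written case (a) does not guarantee the discrete decrease you claim.
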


\begin{proof}
Let $f:\mathbb{R}^n \rightarrow \mathbb{R}$ be a locally Lipschitz coercive tame function. Since $f$ is tame and coercive, there exists $\Delta>0$ such that $X_0 \subset [f\leqslant \Delta/2]$ and $\Delta$ is not a critical value of $f$. By the definable Morse-Sard theorem \cite[Corollary 9]{bolte2007clarke}, $f$ has finitely many critical values $f_1>\cdots>f_p$ in $[f\leqslant \Delta]$ (and it has at least one since $f$ is coercive and continuous). Since $f$ is coercive and continuous, the compact sublevel sets $[|f-f_i|\leqslant \epsilon]$, $i = 1,\hdots,p$, are pairwise disjoint after possibly reducing $\epsilon$, which we may do without loss of generality. We may also assume that $f_1+2\epsilon\leqslant\Delta$. According to the Kurdyka-\L{}ojasiewicz inequality \cite[Theorem 14]{bolte2007clarke} (see also \cite[Theorem 4.1]{attouch2010proximal}) and the monotonicity theorem \cite[(1.2) p. 43]{van1998tame} \cite[Lemma 2]{kurdyka1998gradients}, there exist $\rho>0$ and a strictly increasing concave continuous definable function $\psi:[0,\rho) \rightarrow [0,\infty)$ that is continuously differentiable on $(0,\rho)$ with $\psi(0) = 0$ such that $d(0,\partial f(x)) \geqslant 1/\psi'(|f(x) - f_i|)$ for all $x \in [|f-f_i|\leqslant \epsilon]$ whenever $0<|f(x)-f_i|< \rho$ for $i = 1, \ldots, p$. Without loss of generality, we assume $\epsilon<\rho$ so that $d(0,\partial f(x)) \geqslant 1/\psi'(|f(x) - f_i|)$ for all $x \in [|f-f_i|\leqslant \epsilon]$ such that $f(x) \neq f_i$.

Consider a Lipschitz constant $L\geqslant 1$ of $f$ in $[f\leqslant \Delta]$ and the quantity
	 \begin{equation}
	 \label{eq:M}
	     M := \inf\{d(0,\partial f(x)):|f(x) - f_i|\geqslant \epsilon/2, ~ i =1, \ldots, p,~ f(x)\leqslant \Delta\}>0.
	 \end{equation}
	 Since $\mathcal{M}$ is approximated by subgradient trajectories of $f$, by Definition \ref{def:approx_flow_new} there exist $c,T>0$, and $\bar{\alpha} \in (0,T/2)$ such that
	 such that for all $\alpha \in (0,\bar{\alpha}]$, $\bar{k}\in \mathbb{N}$, and $(x_k)_{k \in \mathbb{N}} \in \mathcal{M}(f,\alpha,[f\leqslant \Delta/2],\bar{k})$ for which $x_0, \ldots, x_{\bar{k}} \in [f \leqslant \Delta]$, there exists a subgradient trajectory $x:[0,T]\rightarrow \mathbb{R}^n$ of $f$ up to the multiplicative constant $c$ for which $x(0)\in [f\leqslant\Delta/2]$ and
	 $\|x_k - x(\alpha (k - \bar{k}))\|\leqslant \epsilon'$ for $k = \bar{k}, \ldots, \bar{k}+ \lfloor T/\alpha\rfloor$ where
	 \begin{equation*}
	     \epsilon' := \min\left\{\frac{\Delta}{4L}, \frac{cM^2T}{24L}, \frac{\epsilon}{8L}, \frac{cT}{2L\psi'(\epsilon/2)^2}\right\}>0.
	 \end{equation*}
	 Since $[|f-f_1|\leqslant \epsilon],\ldots,[|f-f_p|\leqslant \epsilon]$ are compact, after possibly reducing $T$ and $\bar{\alpha}$ the statement still holds if one replaces the initial set $[f\leqslant\Delta/2]$ by $[|f-f_1|\leqslant \epsilon],[|f-f_2|\leqslant \epsilon],\hdots,$ or $[|f-f_p|\leqslant \epsilon]$.
	 
	 From now on, we fix a constant step size $\alpha \in (0,\bar{\alpha}]$.  Consider a sequence $(x_k)_{k \in \mathbb{N}} \in\mathcal{M}(f,\alpha,X_0,0) \subset \mathcal{M}(f,\alpha,[f\leqslant \Delta/2],0)$ along with an associated subgradient trajectory $x:[0,T]\rightarrow \mathbb{R}^n$ of $f$ up to the multiplicative constant $c$ for which $x(0)\in [f\leqslant \Delta/2] \subset [f\leqslant \Delta - \epsilon' L]$ and
	 $\|x_k - x(\alpha (k-\bar{k}))\|\leqslant \epsilon'$ for $k = \bar{k}, \ldots, \bar{k}+K$ where $\bar{k} = 0$ and $K:=\lfloor T/\alpha\rfloor$. By Lemmas \ref{lemma:containment} and \ref{lemma:decrease}, for $k=0,\hdots,K$, we have $f(x_k) \leqslant f(x(k\alpha)) + \epsilon' L \leqslant f(x(0)) + \epsilon'L \leqslant \Delta/2 + \epsilon' L \leqslant \Delta$ and
	 \begin{equation}
  \label{eq:decrease}
		f(x_k) \leqslant f(x_0)- c\int_0^{k\alpha} d(0,\partial f (x(s)))^2~ds + 2\epsilon' L. 
     \end{equation}
If $c\int_0^{K \alpha} d(0,\partial f (x(s)))^2~ds  \geqslant 3\epsilon' L$, then we have $f(x_{K}) \leqslant f(x_0) -3\epsilon' L +2\epsilon' L \leqslant \Delta/2$ so that we may apply Lemmas \ref{lemma:containment} and \ref{lemma:decrease} again with $\bar{k} = K$. Since the continuous function $f$ is bounded below on the compact set $[f\leqslant \Delta/2]$, this process with constant decrease can only be repeated finitely many times. Thus there exist $v\in \mathbb{N}$ and an absolutely continuous function (again denoted $x(\cdot)$) such that $f(x_k) \leqslant f(x_{vK})- c\int_0^{(k-vK)\alpha} d(0,\partial f (x(s)))^2~ds + 2\epsilon' L$ and $\|x_{k}-x(\alpha(k-vK))\| \leqslant \epsilon'$ for $k = vK, \ldots, (v+1)K$ where $c\int_0^{K \alpha} d(0,\partial f (x(s)))^2~ds  < 3\epsilon' L$. Hence there exists $t'\in [0,K \alpha]$ such that $d(0,\partial f(x(t')))^2 \leqslant 3\epsilon' L/(cK\alpha) \leqslant  3\epsilon' L/(cT/2) \leqslant M^2/4$, where we use the fact that $\epsilon' \leqslant cM^2T/(24L)$. Since $d(0,\partial f(x(t'))) \leqslant M/2$ and $f(x(t')) \leqslant \Delta$, by definition of $M$ in \eqref{eq:M} there exists $i\in \{1,\ldots,p\}$ such that $|f(x(t')) - f_i| < \epsilon/2$. We also have that $f(x(t')) \leqslant f(x(0)) \leqslant f(x_{vK}) + \epsilon'L \leqslant \Delta/2 + \epsilon'L$. Thus $f_i<f(x(t')) + \epsilon/2 \leqslant \Delta/2 + \epsilon'L + \epsilon/2 \leqslant \Delta/2+3\epsilon/8$. For $k' = vK,\ldots, (v+1)K$, we have
\begin{subequations}
\label{eq:fvc}
	\begin{align}
	|f(x_{k'}) - f_i| \leqslant & |f(x_{k'}) - f(x(\alpha(k' - vK)))| + |f(x(\alpha(k' - vK))) - f(x(t'))| + \label{eq:fvc1}\\[2mm]
	& |f(x(t')) - f_i| \label{eq:fvc2} \\[2mm]
		 \leqslant & L\|x_{k'} - x(\alpha(k' - vK))\| + |f(x(0)) - f(x(K \alpha))| + \epsilon/4\label{eq:fvc3}\\[2mm]
		 \leqslant & L\epsilon' + 3\epsilon'L + \epsilon/2\label{eq:fvc4}\\[2mm]
		 \leqslant & \epsilon/8+3\epsilon/8+\epsilon/2\label{eq:fvc5}\\[2mm]
		 = & \epsilon.
	\end{align}
\end{subequations}
Indeed, \eqref{eq:fvc1} is due to the triangular inequality. We invoke the Lipschitz constant $L$ of $f$ on $[f\leqslant \Delta]$ in order to bound the first term in \eqref{eq:fvc1}. In order to bound the second term in \eqref{eq:fvc1}, we use the fact that the composition $f\circ x$ is decreasing and $0 \leqslant \alpha(k' - vK) \leqslant t' \leqslant K\alpha$. \eqref{eq:fvc4} holds because $\|x_{k'} - x(\alpha(k' - vK))\| \leqslant \epsilon'$ and $|f(x(0)) - f(x(K \alpha))| = c\int_0^{K \alpha} d(0,\partial f (x(s)))^2~ds  < 3\epsilon' L$. \eqref{eq:fvc5} is due to $\epsilon' \leqslant \epsilon/(8L)$.

We next show that $f(x_k) \leqslant f_i + \epsilon$ for all $k \geqslant k':=vK$. Without loss of generality, we assume that $k'=0$ so that by \eqref{eq:fvc} we have $f(x_k) \leqslant f_i + \epsilon$ for $k = 0, \hdots, K$. We prove that $f(x_{K+1})\leqslant f_i+\epsilon$, hence $f(x_{k})\leqslant f_i + \epsilon$ for all $k\geqslant k'$ by induction. We distinguish two cases. If $f(x_1)<f_i - \epsilon$, then $f(x_{K+1}) \leqslant f(x_1) + 2\epsilon'L < f_i - \epsilon + \epsilon/4 \leqslant f_i + \epsilon$, where the first inequality follows from $x_1 \in [f\leqslant f_i-\epsilon] \subset [f\leqslant \Delta/2 + 3\epsilon/8 - \epsilon] \subset [f\leqslant \Delta/2]$ and Lemmas \ref{lemma:containment} and \ref{lemma:decrease}. If $x_1 \in [|f-f_i|\leqslant \epsilon]$, then let $x:[0,T]\rightarrow \mathbb{R}^n$ be an associated subgradient trajectory of $f$ up to the multiplicative constant $c$ such that $\|x_k - x(\alpha (k-1))\|\leqslant \epsilon'$ for $k = 1,\ldots, K+1$ and $x(0) \in [|f-f_i|\leqslant \epsilon]$. Note that for any $t \in [0,K\alpha]$, $f(x(K\alpha)) \leqslant f(x(t))\leqslant f(x(0)) \leqslant f_i+\epsilon \leqslant \Delta - \epsilon \leqslant \Delta - \epsilon' L$. By Lemmas \ref{lemma:containment} and \ref{lemma:decrease}, $x_{K+1} \in [f\leqslant\Delta]$ and $f(x_{K+1}) \leqslant f(x(K\alpha)) + \epsilon'L$. If $f(x(K\alpha)) \leqslant f_i + \epsilon/2$, we have that $f(x_{K+1}) \leqslant f(x(K\alpha)) + \epsilon'L <f_i+\epsilon/2+\epsilon/8 \leqslant f_i+\epsilon$, as desired. Otherwise, we have $f(x(t)) \in [f_i+\epsilon/2,f_i+\epsilon]$ for all $t \in [0,K \alpha]$. By the Kurdyka-\L{}ojasiewicz inequality, we have $d(0,\partial f(x(t))) \geqslant 1/\psi'(f(x(t)) - f_i) \geqslant 1/\psi'(\epsilon/2)>0$. According to \cite[Lemma 5.2, Theorem 5.8]{davis2020stochastic} (see also \cite{drusvyatskiy2015curves}), it holds that
\begin{subequations}
	\begin{align}
		f(x(K \alpha)) - f_i&\leqslant f(x(0)) - f_i- c\int_{0}^{K \alpha} d(0,\partial f (x(s)))^2~ds\\
		&\leqslant f(x(0)) - f_i- cK\alpha/\psi'(\epsilon/2)^2\\[1mm]
		&\leqslant f(x(0)) - f_i - cT/(2\psi'(\epsilon/2)^2)\\[2mm]
		&\leqslant \epsilon -  cT/(2\psi'(\epsilon/2)^2).
		\end{align}
\end{subequations}
Thus $f(x_{K+1}) - f_i \leqslant f(x(K \alpha)) - f_i + f(x_{K+1}) - f(x(K \alpha)) \leqslant \epsilon - cT/(2\psi'(\epsilon/2)^2) + \epsilon'L \leqslant \epsilon$, where we used the fact that $\epsilon' \leqslant (cT)/(2L\psi'(\epsilon/2)^2)$.

If $|f(x_k) - f_i| \leqslant \epsilon$ for all $k \geqslant k'$, then the conclusion of the theorem follows. Otherwise, there exists $\hat{k}\geqslant k'$ such that $f(x_{\hat{k}})< f_i - \epsilon \leqslant \Delta/2 + 3\epsilon/8 - \epsilon \leqslant \Delta/2$. Following the same argument as in the paragraph below \eqref{eq:decrease}, there exists $v'\in \mathbb{N}$ and an absolutely continuous function (again denoted $x(\cdot)$) such that $f(x_k) \leqslant f(x_{v'K})- c\int_0^{(k - \hat{k})\alpha} d(0,\partial f (x(s)))^2~ds + 2\epsilon' L$ and $\|x_{k}-x(\alpha(k-v'K))\| \leqslant \epsilon'$ for $k = \hat{k}+v'K, \ldots, \hat{k}+(v'+1)K$ where $c\int_0^{K \alpha} d(0,\partial f (x(s)))^2~ds  < 3\epsilon' L$. As before, it follows that there exist $t''\in [0,T]$ and $j \in \{1,2, \ldots, p\}$ such that $|f(x(t'')) - f_j| \leqslant \epsilon/2$. Since $f(x(t'')) \leqslant f(x(0)) \leqslant f(x_{\hat{k} + v'K}) + \epsilon'L\leqslant f(x_{\hat{k}}) + 3\epsilon'L < f_i - \epsilon+ 3\epsilon/8 = f_i - 5\epsilon/8$, it holds that $f_j<f_i$. Replicating \eqref{eq:fvc1}-\eqref{eq:fvc5}, we get $|f(x_{k''}) - f_j| \leqslant \epsilon$ for $k''= \hat{k} + v'K, \ldots, \hat{k} + (v'+1)K$. By the same argument as in the previous paragraph, we have $f(x_k) \leqslant f_j+ \epsilon$ for all $k \geqslant k'':=\hat{k} + (v'+1)K$. Since $f$ only has finitely many critical values, the conclusion of the theorem follows. 
\end{proof}

Theorem \ref{thm:converge} gives a ``weak convergence'' result, in the sense that the function values evaluated at the iterates eventually stabilize around some critical value. In fact, a ``strong convergence'' result regarding the distance between the iterates and the set of critical points can be obtained without any additional assumptions. This is the subject of the following corollary. Note that while Corollary \ref{cor:converge_critical} implies Theorem \ref{thm:converge}, it is not clear how to prove Corollary \ref{cor:converge_critical} without Theorem \ref{thm:converge}.
\begin{corollary}[Stability of iterates]
\label{cor:converge_critical}
Let $f:\mathbb{R}^n \rightarrow \mathbb{R}$ be a locally Lipschitz coercive tame function and let $\mathcal{M}$ be an iterative method with constant step size that is approximated by subgradient trajectories of $f$. For any bounded set $X_0 \subset \mathbb{R}^n$ and $\epsilon>0$, there exists $\bar{\alpha}>0$ such that for all $(x_k)_{k \in \mathbb{N}} \in \mathcal{M}(f,(0,\bar{\alpha}],X_0,0)$, there exist a connected component $C$ of the set of critical points of $f$ and $k_0 \in \mathbb{N}$ such that $d(x_k,C) \leqslant \epsilon$ for all $k \geqslant k_0$.
\end{corollary}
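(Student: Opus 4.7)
The plan is to refine the function-value localization of Theorem \ref{thm:converge} to a spatial localization near a single connected component of the critical set. Apply Theorem \ref{thm:converge} with a small $\tilde{\epsilon}>0$ (to be chosen) to obtain $\bar{\alpha}_0,\Delta>0$, a critical value $f^*$, and $k_0\in\mathbb{N}$ such that $f(x_k)\leqslant\Delta$ for all $k$ and $|f(x_k)-f^*|\leqslant\tilde{\epsilon}$ for all $k\geqslant k_0$. By the definable Morse--Sard theorem there are finitely many critical values in $[f\leqslant\Delta]$, so by shrinking $\tilde{\epsilon}$ we may assume $f^*$ is the only critical value in $[f^*-2\tilde{\epsilon},f^*+2\tilde{\epsilon}]$. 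Let $K^*:=\{x:0\in\partial f(x),\,f(x)=f^*\}\cap[f\leqslant\Delta]$; by coercivity and upper semicontinuity of $\partial f$ it is compact, and by tameness it has finitely many connected components $C_1,\ldots,C_m$. Since a continuous tame function is constant on each connected component of its critical set (the image is a connected tame subset of the discrete set of critical values), each $C_j$ is a connected component of the full critical set of $f$. Set $\epsilon_*:=\min(\epsilon,\min_{j\neq j'}d(C_j,C_{j'})/3)>0$.

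I would then establish two quantitative facts. First, iterates return to $\bigcup_j B(C_j,\epsilon_*/2)$ within any sufficiently long window. The compact set $S:=\{x:d(x,K^*)\geqslant\epsilon_*/4,\,|f(x)-f^*|\leqslant 2\tilde{\epsilon}\}$ contains no critical points, so $M:=\inf_S d(0,\partial f)>0$ by upper semicontinuity of $\partial f$. Applying Definition \ref{def:approx_flow_new} with $X_0=X_1=[f\leqslant\Delta]$ yields constants $c,T>0$; I choose $T$ so that $cM^2T>4\tilde{\epsilon}$ and then the approximation tolerance $\eta>0$ and $\bar{\alpha}\leqslant\bar{\alpha}_0$ small. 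If a block $x_{k_1},\ldots,x_{k_1+K}$ (with $K:=\lfloor T/\alpha\rfloor$ and $k_1\geqslant k_0$) lay entirely outside $B(K^*,\epsilon_*/2)$, the approximating subgradient trajectory would stay in $S$ and Lemma \ref{lemma:decrease} would give $f(x_{k_1+K})\leqslant f(x_{k_1})-cM^2T+2\eta L<f^*-\tilde{\epsilon}$, contradicting Theorem \ref{thm:converge}. Second, a single iteration cannot cross between two distinct component balls: applying Definition \ref{def:approx_flow_new} at shift $\bar{k}=k$ together with the boundedness of $\partial f$ on $[f\leqslant\Delta]$ gives $\|x_{k+1}-x_k\|\leqslant 2\eta+cL\alpha<\epsilon_*$ after further shrinking $\eta$ and $\bar{\alpha}$.

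Combining these, let $k_1\geqslant k_0$ be the smallest index with $x_{k_1}\in B(C_{j_1},\epsilon_*/2)$ for some $j_1$. I claim that $x_k\in B(C_{j_1},\epsilon_*)$ for all $k\geqslant k_1$, which yields the corollary with $C=C_{j_1}$. Suppose otherwise, and let $k^*>k_1$ be the smallest index with $x_{k^*}\notin B(C_{j_1},\epsilon_*)$; let $k''\in[k_1,k^*)$ be the largest index with $x_{k''}\in B(C_{j_1},\epsilon_*/2)$. Because $d(C_{j_1},C_{j'})\geqslant 3\epsilon_*$ for $j'\neq j_1$, the annulus $B(C_{j_1},\epsilon_*)\setminus B(C_{j_1},\epsilon_*/2)$ satisfies $d(\cdot,K^*)\geqslant\epsilon_*/2$, and therefore every iterate $x_{k''+1},\ldots,x_{k^*-1}$ lies in $S$ (the value condition being ensured by Theorem \ref{thm:converge}). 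If $k^*-k''>K$, this contradicts the first fact above. If $k^*-k''\leqslant K$, apply Definition \ref{def:approx_flow_new} at shift $\bar{k}=k''$: the approximating trajectory stays in $S$ on $[0,(k^*-k'')\alpha]$ and moves by at least $\epsilon_*/2-2\eta$ (from near $x_{k''}$ to near $x_{k^*}$), so by the almost-everywhere velocity bound $\|x'(t)\|\leqslant cL$ it takes time at least $(\epsilon_*/2-2\eta)/(cL)$; Lemma \ref{lemma:decrease} then produces an $f$-drop exceeding $2\tilde{\epsilon}+2\eta L$ for small enough $\tilde{\epsilon}$ and $\eta$, again contradicting Theorem \ref{thm:converge}.

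The main obstacle is the careful ordering of the several small parameters: $\epsilon_*$ must be fixed first from the geometry of $K^*$, then $T$ and $M$ from Definition \ref{def:approx_flow_new} and the resulting compact set $S$, and only afterwards can $\tilde{\epsilon},\eta,\bar{\alpha}$ be driven to zero so that every error term ($\eta L$, $2\eta+cL\alpha$, $\tilde{\epsilon}$) becomes negligible compared with the fixed quantities $\epsilon_*,M,T,cL$. Establishing the spatial invariance via the dichotomy $k^*-k''>K$ versus $k^*-k''\leqslant K$ is the heart of the argument; the short-time case exploits the fact that the approximating flow cannot traverse a positive distance through $S$ without producing an $f$-decrease that is forbidden by Theorem \ref{thm:converge}.
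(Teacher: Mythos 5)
Your proposal is correct in its essentials but follows a genuinely different route from the paper. The paper does not need your trapping/exit-time dichotomy: after Theorem \ref{thm:converge} has forced the function values to stabilize within $\epsilon'$ of $f^*$, it applies Lemma \ref{lemma:decrease} on a single window to get $c\int_0^{K\alpha}d(0,\partial f(x(s)))^2\,ds\leqslant 2\epsilon'(1+L)$, finds a time $t'$ with $d(0,\partial f(x(t')))\leqslant M/2$ (so $x(t')$ is within $\epsilon/2$ of the critical set), and then---this is the step your argument replaces---bounds the \emph{spatial} displacement $\|x(0)-x(t')\|\leqslant\sqrt{cT}\,\sqrt{2\epsilon'(1+L)}$ by Cauchy--Schwarz, using $\|x'(s)\|=c\,d(0,\partial f(x(s)))$ along the trajectory. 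This shows at once that \emph{every} iterate $x_k$ with $k\geqslant k_0$ satisfies $d(x_k,S)\leqslant\epsilon$; pinning down a single connected component is then a short separate step using only the one-step bound $\|x_{k+1}-x_k\|\leqslant 3\epsilon'$ and the positive separation of the finitely many components. Your version proves a weaker recurrence (one visit to $B(K^*,\epsilon_*/2)$ per window) and compensates with an invariance argument; the short-window case of your dichotomy, where a traversal of the shell at speed at most $cL$ through a region with $d(0,\partial f)\geqslant M$ forces a definite $f$-drop, is a sound substitute for the Cauchy--Schwarz bound. What each approach buys: the paper's is shorter and gives the stronger intermediate statement $d(x_k,S)\leqslant\epsilon$ for all large $k$ directly; yours isolates the dynamical mechanism (recurrence plus a potential barrier) more explicitly.

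Two points in your write-up need repair. First, as written your $K^*$, $\epsilon_*$, $M$, and hence $\tilde\epsilon$, $\eta$, $\bar\alpha$ all depend on $f^*$, which depends on the particular sequence, whereas the corollary requires $\bar\alpha$ to be uniform over $\mathcal{M}(f,(0,\bar{\alpha}],X_0,0)$; this is fixed by taking minima over the finitely many critical values in $[f\leqslant\Delta]$ and their finitely many components (the paper sidesteps the issue by defining $M$ relative to the whole critical set). Second, the claim that the approximating trajectory ``stays in $S$ on $[0,(k^*-k'')\alpha]$'' fails near $t=0$, since $x_{k''}$ may be arbitrarily close to $C_{j_1}$; you only need, and can only get, that the trajectory spends time at least $(\epsilon_*/2-2\eta)/(cL)$ in the shell where $d(\cdot,K^*)\geqslant\epsilon_*/4$, which suffices for the contradiction. (Relatedly, in Definition \ref{def:approx_flow_new} the horizon $T$ is dictated by the compact sets, so the inequality $cM^2T>4\tilde\epsilon$ must be arranged by shrinking $\tilde\epsilon$ rather than by choosing $T$; your closing paragraph indicates you intend exactly this ordering.)
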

\begin{proof}
Let $f:\mathbb{R}^n \rightarrow \mathbb{R}$ be a locally Lipschitz coercive tame function and let $\mathcal{M}$ be an iterative method with constant step size that is approximated by subgradient trajectories of $f$. Let $X_0$ be a bounded subset of $\mathbb{R}^n$ and let $\epsilon>0$. By Theorem \ref{thm:converge}, there exists $\alpha_1,\Delta>0$ such that for all $(x_k)_{k\in\mathbb{N}} \in \mathcal{M}(f,(0,\alpha_1], X_0, 0)$, $f(x_k) \leqslant \Delta$ for all $k \in \mathbb{N}$.

Let $L$ denote a Lipschitz constant of $f$ on the compact set $[f\leqslant \Delta]$ and consider the quantity
\begin{equation}
    M := \inf\{d(0,\partial f(x)): d(x,S)\geqslant \epsilon/2, f(x)\leqslant \Delta\}>0,
\end{equation}
where $S$ is the set of critical points of $f$. Since $\mathcal{M}$ is approximated by subgradient trajectories of $f$, by Definition \ref{def:approx_flow_new} there exist $c,T>0$, and $\alpha_2 \in (0,\alpha_1]$ such that for all $\alpha \in (0,\alpha_2],\bar{k} \in \mathbb{N}$ and $(x_k)_{k\in \mathbb{N}} \in \mathcal{M}(f,\alpha,[f\leqslant \Delta], \bar{k})$ for which $x_0,\ldots,x_{\bar{k}} \in [f \leqslant \Delta]$, there exists a subgradient trajectory $x:[0,T]\rightarrow \mathbb{R}^n$ of $f$ up to the multiplicative constant $c$ for which $x(0)\in [f\leqslant\Delta]$ and $\|x_{k} - x((k - \bar{k})\alpha)\|\leqslant \epsilon'$ for $k = \bar{k}, \ldots, \bar{k} + \lfloor T/\alpha\rfloor$ where $\epsilon' := \min\{\epsilon/4,cM^2T/(16(1+L)),\epsilon^2/(32(1+L)cT)\}$. Again by Theorem \ref{thm:converge} there exists $\alpha_3 \in (0,\alpha_2]$ such that for all $(x_k)_{k \in \mathbb{N}} \in \mathcal{M}(f,(0,\alpha_3],X_0,0)$, there exist a critical value $f^*$ of $f$ and $k_0 \in \mathbb{N}$ such that $ |f(x_k) - f^*| \leqslant \epsilon'$ for all $k \geqslant k_0$. Let $\bar{\alpha} := \min\{\alpha_3, \epsilon'/(2c(1+L)),T/2\}$. 

Let $\alpha \in (0,\bar{\alpha}]$, $(x_k)_{k \in \mathbb{N}} \in \mathcal{M}(f,\alpha, X_0, 0)$, and fix a corresponding $f^*$ and $k_0$. We fix some $k \geqslant k_0$ from now on and show that $d(x_k,S)\leqslant \epsilon$. Since $(x_{k'})_{k'\in \mathbb{N}}\in \mathcal{M}(f,\alpha, [f\leqslant \Delta],k)$ and $(x_{k'})_{k'\in \mathbb{N}} \subset [f\leqslant \Delta]$,  there exists a subgradient trajectory $x:[0,T]\rightarrow \mathbb{R}^n$ of $f$ up to the multiplicative constant $c$ for which $x(0)\in [f\leqslant\Delta]$ and $\|x_{k'} - x(\alpha (k'-k))\|\leqslant \epsilon'$ for $k' = k, \ldots, k + K$ where $K:= \lfloor T/\alpha\rfloor$. By Lemma \ref{lemma:decrease}, we have
 \begin{equation}
 \label{eq:infgrad}
   c\int_0^{K \alpha} d(0,\partial f (x(s)))^2~ds \leqslant f(x_k) - f(x_{k + K}) + 2\epsilon' L \leqslant 2\epsilon' (1+L).
 \end{equation}
Thus there exists $t' \in [0,K \alpha]$ such that $d(0,\partial f(x(t')))^2\leqslant 2\epsilon' (1+L)/(cK\alpha) \leqslant 2\epsilon' (1+L)/(cT/2) \leqslant M^2/4$, where we use the fact that $\epsilon' \leqslant cM^2T/(16(1+L))$. As $f(x(t')) \leqslant f(x(0))\leqslant \Delta$, we have $d(x(t'),S) \leqslant \epsilon/2$. It now suffices to show that $\|x_k-x(t')\| \leqslant \epsilon/2$. Notice that $\|x_k - x(0)\| \leqslant \epsilon' \leqslant \epsilon/4$ and
\begin{subequations}
    \begin{align}
        \|x(0) - x(t')\| &\leqslant \int_0^{t'} \|x'(s)\|~ds \label{eq:fvl-a}\\
        &= \int_0^{t'} c~d(0,\partial f(x(s)))~ds\label{eq:fvl-b}\\
        &\leqslant \sqrt{\int_0^{t'} c~ds} \sqrt{\int_0^{t'} c~d(0,\partial f(x(s)))^2~ds}\label{eq:fvl-c}\\
        &\leqslant \sqrt{\int_0^{T} c~ds} \sqrt{\int_0^{K \alpha}
        c~d(0,\partial f(x(s)))^2~ds}\label{eq:fvl-d}\\[2mm]
        &\leqslant \sqrt{cT} \sqrt{2\epsilon' (1+L)}\label{eq:fvl-e}\\[2mm]
        &\leqslant \epsilon/4.\label{eq:fvl-f}
    \end{align}
\end{subequations}
Indeed, \eqref{eq:fvl-a} is due to triangular inequality. \eqref{eq:fvl-b} is a consequence of \eqref{eq:ivp} and \cite[Lemma 5.2, Theorem 5.8]{davis2020stochastic} (see also \cite{drusvyatskiy2015curves}). \eqref{eq:fvl-c} is due to the Cauchy-Schwarz inequality. \eqref{eq:fvl-f} is due $\epsilon' \leqslant \epsilon^2/(32(1+L)cT)$. Summing up, we have $|d(x_k,S) - d(x(t'),S)| \leqslant \|x_k-x(t')\|\leqslant \|x_k-x(0)\|+\|x(0)-x(t')\| \leqslant \epsilon/2$ and thus $d(x_k,S) \leqslant \epsilon$. 

We have just shown that for all $d(x_k,S) \leqslant \epsilon$ for all $k\geqslant k_0$. Since $x_k \in [f \leqslant \Delta]$, we have that $d(x_k, S') = d(x_k,S) \leqslant \epsilon$, where $S':= S\cap B([f\leqslant \Delta],2\epsilon)$. By the cell decomposition theorem \cite[(2.11) p. 52]{van1998tame}, the definable compact set $S'$ has finitely many compact connected components $C_1,\hdots,C_q$. Thus for each $k\geqslant k_0$, there exists $i_k \in \{1,\hdots,q\}$ such that $d(x_k,C_{i_k}) \leqslant \epsilon$. We next show that $d(x_{k+1},C_{i_k})\leqslant \epsilon$, so that $i_k$ can actually be chosen independently of $k$. Naturally, we have $d(C_i,C_j):=\inf \{ \|x-y\| : (x,y) \in C_i \times C_j\}>0$ for all $i\neq j$, otherwise $C_i \cap C_j \neq \emptyset$. Without loss of generality, we may assume that $\epsilon \leqslant \min \{ d(C_i,C_j) : i \neq j\}/4$. It follows that, for all $j\neq i_k$, we have $d(x_k,C_j) \geqslant d(C_{i_k},C_j) - d(x_k,C_{i_k}) \geqslant 4\epsilon - \epsilon = 3\epsilon$. Similar to \eqref{eq:fvl-a}-\eqref{eq:fvl-f}, we have $\|x(0)-x(\alpha)\| \leqslant \sqrt{c\alpha}\sqrt{2\epsilon'(1+L)} \leqslant \epsilon'$ since $\alpha \leqslant \bar{\alpha} \leqslant \epsilon'/(2c(1+L))$. Thus $\|x_{k+1}-x_k\| \leqslant \|x_{k+1}-x(\alpha)\| + \|x(\alpha)-x(0)\| + \|x(0)-x_k\| \leqslant 3\epsilon' \leqslant \epsilon$. Hence $d(x_{k+1},C_j) \geqslant d(x_k,C_j) - \|x_k - x_{k+1}\| \geqslant 3\epsilon - \epsilon = 2\epsilon$ for all $j\neq i_k$. Since $d(x_{k+1},S) = \min\{d(x_{k+1},C_j): j=1, \hdots, q\} \leqslant \epsilon$, we conclude that $d(x_{k+1},C_{i_k}) \leqslant \epsilon$. 
\end{proof}

\begin{remark}
\label{remark:coercive}
The assumption that $f$ is coercive in Theorem \ref{cor:converge_critical} and Corollary \ref{cor:converge_critical} can be replaced by requiring the iterates to be uniformly bounded for all sufficiently small step sizes when initialized in $X_0$. In other words, we can ask for there to exist $\bar{\alpha},r>0$ such that $\mathcal{M}(f,(0,\bar{\alpha}],X_0,0) \subset B(0,r)^{\mathbb{N}}$. Indeed, one can then apply our results to a coercive function $f_r$ which coincides with $f$ in $B(0,2r)$, namely $f_r(x):=  f(P_{B(0,2r)}(x)) + d(x,B(0,2r))$ for all $x \in \mathbb{R}^n$ where $P_{B(0,2r)}$ is the projection on $B(0,2r)$. It is clear that $f_r$ is definable and coercive. In order to show that $f_r$ is Lipschitz continuous, it suffices to prove $g_r(x):= f(P_{B(0,2r)}(x))$ is locally Lipschitz. Let $L>0$ denote a Lipschitz constant of $f$ in $B(0,2r)$. For all $x,y\in \mathbb{R}^n$, we have $\|g_r(x) - g_r(y)\| = \|f(P_{B(0,2r)}(x)) - f(P_{B(0,2r)}(y))\| \leqslant L\|P_{B(0,2r)}(x) - P_{B(0,2r)}(y)\| \leqslant L\|x - y\|$. 
\end{remark}
\section{Approximation of first-order methods by subgradient trajectories}
\label{sec:Continuous-time}
The theory we developed in the previous section provides a unified framework under which global stability of iterative methods with constant step sizes can be established. In this section, we show that all of the first-order methods that we mentioned in Section \ref{sec:Introduction} are approximated by subgradient trajectories under appropriate assumptions on the objective functions. As a result, Theorem \ref{thm:converge} and Corollary \ref{cor:converge_critical} can be applied to conclude global stability of those methods. We need the following lemma in order to prove the approximation of random reshuffling with momentum.
\begin{lemma}
\label{lemma:O(alpha)}
Let $f_1,\ldots,f_N$ be locally Lipschitz,  $X \subset \mathbb{R}^n$ be bounded, $\delta \geqslant 0$, $\beta\in(-1,1)$, and $\gamma\in\mathbb{R}$. There exist $\delta', \bar{\alpha} > 0$ such that for all $\alpha\in (0,\bar{\alpha}]$, $\bar{k} \in \mathbb{N}$, and sequence $(x_{k,i})_{(k,i)\in \mathbb{N}\times \{0,\ldots, N\}}$ generated by random reshuffling with momentum (Algorithm \ref{alg:rrm}) for which $x_{0},\hdots,x_{\bar{k}}\in X$, we have 
\begin{subequations}
    \begin{align*}
        \|x_{k,i}-x_{k,i-1}\| \leqslant \delta'\alpha
    \end{align*}
\end{subequations}
for $k = 0,\ldots, \bar{k}$ and $i = 0,\ldots, N$.
\end{lemma}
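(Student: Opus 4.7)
The plan is to induct on $(k,i)$ in lexicographic order and extract from the update rule of Algorithm \ref{alg:rrm} a first-order linear recurrence on $d_{k,i}:=\|x_{k,i}-x_{k,i-1}\|$. Subtracting $x_{k,i-1}$ from both sides of the update of $x_{k,i}$ gives
\begin{equation*}
d_{k,i} \;\leqslant\; |\beta|\, d_{k,i-1} \;+\; \alpha\, \|g_{k,i}\|,\qquad g_{k,i}\in \partial f_{\sigma^k_i}(y_{k,i}).
\end{equation*}
To dispose of $\|g_{k,i}\|$, I fix a compact neighborhood $X'$ of $X$ (e.g.\ $X'=\overline{X}+B(0,1)$) and take $L\geqslant 1$ to be a common Lipschitz constant of $f_1,\hdots,f_N$ on $X'$; then every Clarke subgradient of every $f_i$ at any point of the interior of $X'$ lies in $B(0,L)$.

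Assuming for the moment that every $y_{k,i}$ lies in $X'$, the inequality $d_{k,i}\leqslant |\beta|d_{k,i-1}+\alpha L$ telescopes within a single epoch to $d_{k,N}\leqslant |\beta|^N d_{k,0}+\alpha L/(1-|\beta|)$. Since $x_{k,0}=x_{k-1,N}$ and $x_{k,-1}=x_{k-1,N-1}$ we have $d_{k,0}=d_{k-1,N}$, while the initial condition of Algorithm \ref{alg:rrm} gives $d_{0,0}\leqslant \delta\alpha$. Iterating the one-epoch bound over $k$ produces the uniform estimate
\begin{equation*}
d_{k,i}\;\leqslant\;\delta'\alpha,\qquad \delta'\;:=\; \delta+\frac{L}{(1-|\beta|)(1-|\beta|^N)}+\frac{L}{1-|\beta|},
\end{equation*}
a quantity depending only on $\delta,\beta,L,N$.

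The main subtlety is that the subgradient bound $\|g_{k,i}\|\leqslant L$ requires $y_{k,i}\in X'$, and $y_{k,i}=x_{k,i-1}+\gamma(x_{k,i-1}-x_{k,i-2})$ depends on the very quantities we are trying to control. To break the circularity I fix $\delta'$ purely in terms of $\delta,\beta,L,N$ as above and then choose $\bar{\alpha}>0$ small enough that $(N+|\gamma|)\,\delta'\,\bar{\alpha}\leqslant 1$. At any step of the induction, granted the inductive hypothesis $d_{k,j}\leqslant \delta'\alpha$ for all earlier indices $j$ in the current epoch, one has
\begin{equation*}
\|y_{k,i}-x_k\|\;\leqslant\; \sum_{j=0}^{i-1}d_{k,j}+|\gamma|\,d_{k,i-1}\;\leqslant\; (N+|\gamma|)\,\delta'\,\alpha\;\leqslant\; 1,
\end{equation*}
so $y_{k,i}\in X'$, the subgradient bound applies, and the recurrence yields $d_{k,i}\leqslant \delta'\alpha$, closing the induction.

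The hardest part is precisely this bootstrap between keeping the auxiliary iterates $y_{k,i}$ inside the region where the Clarke subgradients are uniformly bounded and controlling the $O(\alpha)$ size of the successive differences. Once the correct ordering of quantifiers — first fix $\delta'$ in terms of $\delta,\beta,L,N$, then shrink $\bar{\alpha}$ — is in place, the rest is a routine two-layer geometric-series argument.
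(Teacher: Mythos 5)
Your argument is correct and takes essentially the same route as the paper's proof: a lexicographic induction on $(k,i)$ that bootstraps a uniform bound on the subgradients over a fixed compact neighborhood of $X$ against the $O(\alpha)$ control of the increments, fixing $\delta'$ in terms of the problem data first and then shrinking $\bar{\alpha}$ so that the auxiliary points $y_{k,i}$ stay in that neighborhood. The only cosmetic difference is that the paper chooses $\delta' = r'/(1-|\beta|)$ as an exact fixed point of $d \mapsto |\beta| d + r'$ (with $r'$ a uniform bound on $\cup_i \partial f_i$ over the neighborhood), whereas your telescoped two-layer constant is larger than needed but closes the induction for the same reason, namely $\delta' \geqslant L/(1-|\beta|)$.
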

\begin{proof}
    Let $r>0$ such that $x_{-1,0} \in B(0,r/2)$ and $X \subset B(0,r/2)$. Since $f_1,f_2, \ldots, f_N$ are locally Lipschitz, their corresponding Clarke subdifferentials $\partial f_1, \partial f_2, \ldots, \partial f_N$ are upper semicontinuous \cite[2.1.5 Proposition (d)]{clarke1990} with compact values \cite[2.1.2 Proposition (a)]{clarke1990}. Thus, by \cite[Proposition 3 p. 42]{aubin1984differential} there exists $r'>\delta$ such that $\cup_{i = 1}^N\partial f_i(B(0,r)) \subset B(0,r')$. Let
    \begin{equation*}
        \delta':= \frac{r'}{1 - |\beta|}~~~\text{and}~~~\bar{\alpha}:= \frac{r}{2\delta' ( N+ |\gamma|)}.
    \end{equation*}
    Fix any  $\alpha\in (0,\bar{\alpha}]$, $\bar{k} \in \mathbb{N}$, and sequence $(x_{k,i})_{(k,i)\in \mathbb{N}\times \{0,\ldots, N\}}$ generated by random reshuffling with momentum (Algorithm \ref{alg:rrm}) for which $x_{0},\hdots,x_{\bar{k}}\in X$. We will prove the lemma using induction on $(k,i)$ with the total order $\preccurlyeq$ defined by $(k_1,i_1)\preccurlyeq (k_2,i_2)$ if $k_1<k_2$ or $k_1 = k_2$ and $i_1\leqslant i_2$. For the base case, note that $\|x_{0,0} - x_{0,-1}\| \leqslant \delta \alpha <r'\alpha \leqslant \delta' \alpha$. Now fix any $(k,i) \in \{0,\ldots \bar{k}\}\times \{0,\ldots,N\}$ and assume that $\|x_{k',i'}-x_{k',i'-1}\| \leqslant \delta'\alpha$ for all $(k',i')\preccurlyeq (k,i-1)$ (we identify $(k'-1,N-1)$ with $(k',-1)$ for notational simplicity; possibly negative indices $i$ are treated similarly throughout the paper). Then
    \begin{subequations}
        \begin{align}
            \|y_{k,i}\| &\leqslant \|y_{k,i} - x_{k,i-1}\| + \|x_{k,i-1} - x_{k,0}\| + \|x_{k,0}\| \label{eq:ybound-a}\\
            &\leqslant |\gamma| \|x_{k,i-1} - x_{k,i-2}\| + \sum_{j = 1}^{i-1}\|x_{k,j} - x_{k,j-1}\| + \|x_{k}\|\label{eq:ybound-b}\\
            &\leqslant |\gamma|\delta'\alpha + (i-1)\delta'\alpha + \frac{r}{2}\label{eq:ybound-c}\\
            &\leqslant (|\gamma| + N -1)\delta'\alpha + \frac{r}{2}\label{eq:ybound-d}\\
            &\leqslant r \label{eq:ybound-e}.
        \end{align}
    \end{subequations}
Above, we use the triangular inequality in \eqref{eq:ybound-a}. We apply the update rule and again the triangular inequality to obtain \eqref{eq:ybound-b}. \eqref{eq:ybound-c} is a result of the inductive hypothesis. \eqref{eq:ybound-d} and \eqref{eq:ybound-e} follow from $i\leqslant N$ and $\alpha \leqslant \bar{\alpha}:= r/(2\delta'(|\gamma| + N - 1))$ respectively.

Thus $x_{k,i} - x_{k,i-1} - \beta (x_{k,i-1} - x_{k,i-2}) \in -\alpha \partial f_{\sigma_i^k}(y_{k,i}) \subset \alpha B(0,r')$. Therefore,
\begin{subequations}
    \begin{align*}
        \|x_{k,i} - x_{k,i-1}\| &\leqslant |\beta|\|x_{k,i-1} - x_{k,i-2}\| +  r'\alpha\\
        &\leqslant |\beta| \delta'\alpha + r'\alpha\\
        &= \delta'\alpha.
    \end{align*}
\end{subequations} 
\end{proof}

Recall that a locally Lipschitz function $f:\mathbb{R}^n\rightarrow\mathbb{R}$ is subdifferentially regular \cite[2.3.4 Definition]{clarke1990} if its generalized directional derivative agrees with the classical directional derivative, that is to say, we have 
\begin{equation*}
    \limsup_{\scriptsize\begin{array}{c} y\rightarrow x \\
    t \searrow 0
    \end{array}
    } \frac{f(y+th)-f(y)}{t} ~ = ~ \lim_{
    t \searrow 0
    } \frac{f(x+th)-f(x)}{t}
\end{equation*}
for all $x\in \mathbb{R}^n$ and $h \in \mathbb{R}^n$, and the limit on the right hand side exists. We assume subdifferential regularity in Propositon \ref{prop:rrm_flow} in order to guarantee that $\partial (f_1+\cdots+f_N) = \partial f_1 + \cdots + \partial f_N$, while in general we only know that $\partial (f_1+\cdots+f_N) \subset \partial f_1 + \cdots + \partial f_N$ holds \cite[2.3.3 Proposition]{clarke1990} (see Remark \ref{remark:need_reg}). If we do not assume subdifferential regularity, Proposition \ref{prop:rrm_flow} still holds with the same proof if in the conclusion we replace ``approximated by subgradient trajectories of $f$'' by ``approximated by trajectories of the conservative field $(\partial f_1+ \cdots +\partial f_N)/N$ \cite[Definition 1, Corollary 4]{bolte2020conservative} of $f$''. Theorem \ref{thm:converge} and Corollary \ref{cor:converge_critical} then hold with critical values and points associated with the conservative field under the additional assumption that $f_1,\hdots,f_N$ are definable \cite[Theorems 5 and 6]{bolte2020conservative}. On the other hand, since Algorithm \ref{alg:mag} does not consider the composite structure of the objective function, we do not require subdifferential regularity in order to obtain approximation and stability guarantees, as can be seen in Table \ref{tab:assumption}.

\begin{proposition}
\label{prop:rrm_flow}
Random reshuffling with momentum (Algorithm \ref{alg:rrm}) is approximated by subgradient trajectories of composite functions $f = (f_1 +\cdots + f_N)/N$ up to the multiplicative constant $N/(1-\beta)$ where $f_1,\ldots, f_N$ are locally Lipschitz and subdifferentially regular.
\end{proposition}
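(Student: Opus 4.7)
My plan is to verify Definition \ref{def:approx_flow_new} for Algorithm \ref{alg:rrm} with $c := N/(1-\beta)$ by an Arzelà--Ascoli compactness argument carried out by contradiction, in the spirit of Remark \ref{remark:dontchev}. Fix compact sets $X_0, X_1 \subset \mathbb{R}^n$. First I would choose $T>0$ small enough that every subgradient trajectory of $cf$ starting in $X_0$ stays inside a fixed compact enlargement $K$ of $X_0 \cup X_1$ on $[0,T]$; this is possible because $f$ is locally Lipschitz, so $\partial f$ is locally bounded. Suppose, toward a contradiction, that the conclusion fails for some $\epsilon>0$: there exist $\alpha_m \searrow 0$, $\bar k_m \in \mathbb{N}$, and iterates $(x^m_k)_{k\in\mathbb{N}} \in \mathcal{M}(f,\alpha_m,X_0,\bar k_m)$ with $x^m_0,\ldots,x^m_{\bar k_m}\in X_1$ such that no subgradient trajectory $x$ of $cf$ with $x(0)\in X_0$ satisfies $\|x^m_k - x((k-\bar k_m)\alpha_m)\| \leqslant \epsilon$ for every $k = \bar k_m,\ldots,\bar k_m+\lfloor T/\alpha_m\rfloor$.

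Second, I build the piecewise-linear interpolation $z^m:[0,T]\to\mathbb{R}^n$ with $z^m(j\alpha_m)=x^m_{\bar k_m + j}$ for $j=0,\ldots,\lfloor T/\alpha_m\rfloor$ and linear on each subinterval. After shrinking $\bar\alpha$ if necessary, Lemma \ref{lemma:O(alpha)} applied with $X=K$ gives a uniform $\delta'>0$ and the bound $\|x_{k,i}-x_{k,i-1}\|\leqslant \delta'\alpha_m$ for all inner steps in the relevant range, so $\|x^m_{k+1}-x^m_k\|\leqslant N\delta'\alpha_m$ and in particular the iterates remain inside $K$ throughout the interval. Hence $(z^m)$ is equi-$N\delta'$-Lipschitz and uniformly bounded, so Arzelà--Ascoli extracts a subsequence, not relabeled, converging uniformly on $[0,T]$ to a Lipschitz limit $z^*$ with $z^*(0)\in X_0$.

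The decisive step is to show $(z^*)'(t)\in -c\,\partial f(z^*(t))$ for almost every $t\in[0,T]$. Unrolling $v_{k,i} := x_{k,i}-x_{k,i-1} = \beta v_{k,i-1}-\alpha_m s_{k,i}$ with $s_{k,i}\in \partial f_{\sigma^k_i}(y_{k,i})$ across one epoch yields
\begin{equation*}
x^m_{k+1}-x^m_k \;=\; \tfrac{\beta(1-\beta^N)}{1-\beta}\,v_{k,0} \;-\;\alpha_m\sum_{j=1}^{N}\tfrac{1-\beta^{N-j+1}}{1-\beta}\,s_{k,j},
\end{equation*}
and iterating the carryover recursion $v_{k,0}=\beta^N v_{k-1,0}-\alpha_m \sum_j \beta^{N-j}s_{k-1,j}$ shows that the initial momentum decays geometrically with ratio $|\beta^N|<1$ and is itself an $O(\alpha_m)$-weighted combination of past component subgradients. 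Accumulating the above identity over the epochs inside any window $[t,t+h]\subset[0,T]$ and collecting the contributions of each $s_{k,j}$ from both the direct term and the carryover, one checks (via the steady-state computation $v_{k,0}\to -\alpha_m s^*/(1-\beta)$ when $s_{k,j}\equiv s^*$) that the effective weight accompanying each $s_{k,j}$ equals $\alpha_m/(1-\beta)$ modulo vanishing boundary effects. Hence $z^m(t+h)-z^m(t)$ is, up to $o(h)$, an average of $-\tfrac{\alpha_m}{1-\beta}\sum_{j=1}^N s_{k,j}$ over the epochs in the window; since $y_{k,j}\to z^*(\cdot)$ uniformly, upper semicontinuity and compact convex values of each $\partial f_i$, combined with Mazur's lemma (or the convergence theorem for differential inclusions in \cite{aubin1984differential}), give $z^*(t+h)-z^*(t)\in -c\int_t^{t+h}\tfrac1N\sum_{i=1}^N\partial f_i(z^*(s))\,ds$. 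Subdifferential regularity yields $\partial f = (\partial f_1+\cdots+\partial f_N)/N$ \cite[2.3.3 Proposition]{clarke1990}, so dividing by $h$ and taking $h\downarrow 0$ at Lebesgue points delivers the differential inclusion. Therefore $z^*$ is a subgradient trajectory of $cf$, and the uniform convergence $z^m\to z^*$ contradicts the failure hypothesis.

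The main obstacle is precisely this passage to the limit, in which I must (i) isolate the epoch-to-epoch momentum carryover $v_{k,0}$ and damp it via the geometric factor $\beta^N$, (ii) account correctly for the $\beta$-weighted averaging of subgradients within a single epoch so that the net per-iteration weight on each element of $\partial f_i$ reduces to $1/(1-\beta)$, and (iii) invoke subdifferential regularity at the very end to replace $\partial f_1+\cdots+\partial f_N$ by $N\partial f$. Without this last ingredient only the inclusion $\partial(f_1+\cdots+f_N)\subseteq \partial f_1+\cdots+\partial f_N$ is available, and the argument would instead deliver a trajectory of the conservative field $(\partial f_1+\cdots+\partial f_N)/N$, consistent with the weaker conclusion discussed in the paragraph immediately preceding the proposition.
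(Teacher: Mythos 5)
Your proposal is correct and shares the overall skeleton of the paper's proof (Lemma \ref{lemma:O(alpha)} for the $O(\alpha)$ inner-step bounds and containment in a ball, linear interpolation, Arzel\`a--Ascoli, a final contradiction argument, and subdifferential regularity invoked only at the end to replace $\partial f_1+\cdots+\partial f_N$ by $N\partial f$), but the decisive step --- identifying the limit dynamics --- is done by a genuinely different route. The paper sums the one-step inclusion over a single epoch to obtain $x_{k+1,0}^m - x_{k,0}^m - \beta(x_{k+1,-1}^m - x_{k,-1}^m) \in -\alpha_m\sum_i\partial f_{\sigma_i^k}(y_{k,i}^m)$, introduces a second interpolation $\bar{x}_{-1}^m$ built from the iterates $x_{k,-1}^m$, shows its derivative has the same weak $L^1$ limit $x'$ as $(\bar{x}^m)'$ (since the two interpolations differ by $O(\alpha_m)$), and applies the Aubin--Cellina Convergence Theorem to conclude $(1-\beta)x'\in-\sum_i\partial f_i(x)$ directly; the momentum is absorbed implicitly by the weak-convergence argument. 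You instead unroll $v_{k,i}=\beta v_{k,i-1}-\alpha_m s_{k,i}$ completely and do the geometric bookkeeping explicitly; your weight computation is right --- the direct coefficient $(1-\beta^{N-j+1})/(1-\beta)$ on $s_{k,j}$ plus the carryover contribution $\beta^{N-j}\cdot\frac{\beta(1-\beta^N)}{1-\beta}\cdot(1-\beta^N)^{-1}=\beta^{N-j+1}/(1-\beta)$ indeed sums to $1/(1-\beta)$ --- and this makes the provenance of the constant $N/(1-\beta)$ fully transparent and dispenses with the auxiliary interpolation. The price is that your route requires quantitative control of the boundary spillover (the carryover entering and leaving each window $[t,t+h]$), which you only sketch as ``modulo vanishing boundary effects''; these are $O(\alpha_m)$ uniformly thanks to Lemma \ref{lemma:O(alpha)} and the geometric decay with ratio $|\beta|^N<1$, so the gap is fillable, but the paper's weak-convergence argument avoids this bookkeeping entirely. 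One minor imprecision: $T$ should be chosen (as in the paper, $T=r/(4\delta' N\max\{1,|\gamma|\})$) so that the \emph{discrete} iterates provably remain in the compact set on which $\delta'$ was computed, rather than by reference to where the continuous trajectories stay; your subsequent induction on $\|x_{k+1}^m-x_k^m\|\leqslant N\delta'\alpha_m$ supplies exactly this once $T$ is fixed that way.
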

\begin{proof}
Let $\mathcal{M}$ denote the random reshuffling with fixed momentum parameters $\beta \in (-1,1)$, $\gamma \in \mathbb{R}$, and $\delta>0$. Let $X_0, X_1 \subset \mathbb{R}^n$ be compact sets and consider $r>0$ such that  $X_0, X_1 \subset B(0,r/2) \subset \mathbb{R}^n$. By Lemma \ref{lemma:O(alpha)}, there exist $\delta', \bar{\alpha} > 0$ such that for all $\alpha\in (0,\bar{\alpha}]$, $\tilde{k} \in \mathbb{N}$, and sequence $(x_{k,i})_{(k,i)\in \mathbb{N}\times \{0,\ldots, N\}}$ generated by random reshuffling with momentum (Algorithm \ref{alg:rrm}) for which $x_{0},\hdots,x_{\tilde{k}}\in B(0,r)$, we have that $\|x_{k,i}-x_{k,i-1}\| \leqslant \delta'\alpha$
for $k = 0,\ldots, \tilde{k}$ and $i = 0,\ldots, N$. Let $T:= r/(4\delta' N\max\{1, |\gamma|\})>0$ and $\bar{k}\in \mathbb{N}$.  We next show that any sequence $(x_{k,i})_{(k,i)\in \mathbb{N}\times \{0,\ldots, N\}}$ and $(y_{k,i})_{(k,i)\in \mathbb{N}\times \{1,\ldots, N\}}$ generated by Algorithm \ref{alg:rrm} with step size $\alpha\in (0,\min\{T/2,\bar{\alpha}\}]$ such that $x_0, \ldots, x_{\bar{k}} \in X_1$ and $x_{\bar{k}} \in X_0$ satisfy $x_{k,0},\ldots,x_{k,N}, y_{k,1}, \ldots, y_{k,N}\in B(0,r)$ for $k = \bar{k}, \ldots,\bar{k} + K-1$ where $K:= \lfloor T/\alpha\rfloor+1$.

Fix any such $\alpha$ and sequence generated by Algorithm \ref{alg:rrm}. Note that $\alpha K = \alpha (\lfloor T/\alpha\rfloor+1) \leqslant 2T$ and thus $\alpha \leqslant (2T)/K = r/(2K\delta' N\max\{1, |\gamma|\})$. As $x_0,\ldots,x_{\bar{k}_m} \in B(0,r)$, we have that $\|x_{\bar{k},i}\| \leqslant \|x_{\bar{k}}\| + \sum_{j = 1}^i\|x_{k,j} - x_{k,j-1}\| \leqslant r/2 +i\delta'\alpha \leqslant r/2 + ir/(2NK) \leqslant r/2 + r/(2K)$ for $i = 1,\ldots, N$, where we apply Lemma \ref{lemma:O(alpha)} with $\tilde{k} = \bar{k}$ in the second last inequality. In particular, $x_{\bar{k}+1} = x_{\bar{k},N} \in B(0,r/2 + r/(2K))$. Apply the previous argument recursively, we have that $x_{k,i} \in B(0,r/2+ (k-\bar{k})r/(2K) + ir/(2NK)) \subset B(0,r)$ for $k = \bar{k},\ldots, \bar{k} + K-1$. By the update rule of Algorithm \ref{alg:rrm}, $\|y_{k,i}\| \leqslant \|y_{k,i} - x_{k,i-1}\| + \|x_{k,i-1}\| = |\gamma|\|x_{k,i-1} - x_{k,i-2}\| + \|x_{k,i-1}\| \leqslant |\gamma|\delta'\alpha +r/2+ (k-\bar{k})r/(2K) + ir/(2NK) \leqslant  r/(2NK) +r/2+ (k-\bar{k})r/(2K) + ir/(2NK)\leqslant r$ for $k = \bar{k}, \ldots, \bar{k}+K-1$ and $i = 1, \ldots, N$.

Let $(\alpha_m)_{m\in \mathbb{N}}$ be a positive sequence that converges to zero and let $(\bar{k}_m)_{m\in \mathbb{N}}$ be a sequence of natural numbers. For each $m \in \mathbb{N}$, we attribute a sequence of iterates $(x^m_k)_{k\in \mathbb{N}} \in \mathcal{M}(f,\alpha_m, X_0,\bar{k}_m)$ such that $x_0, \ldots, x_{\bar{k}} \in X_1$. We may assume that $\alpha_m \in (0,\min\{T/2,\bar{\alpha}\}]$ for any $m$, then  $x^m_{k,0},\ldots,x^m_{k,N}, y^m_{k,1}, \ldots, y^m_{k,N}\in B(0,r)$ for $k = \bar{k}_m, \ldots, \bar{k}_m + \lfloor T/\alpha_m\rfloor$. Consider the linear interpolation of the iterates $x^m_{\bar{k}_m},x^m_{\bar{k}_m + 1}, \ldots, x^m_{\bar{k}_m + \lfloor T/\alpha_m\rfloor+1}$, that is to say, the function $\bar{x}^{m}(\cdot)$ defined from $[0,T]$ to $\mathbb{R}^n$ by 
\begin{equation*}
    \bar{x}^{m}(t) := x_k^{m} + (t-\alpha_m (k - \bar{k}_m))\frac{ x_{k+1}^{m}-x_k^{m}}{\alpha_m}    
\end{equation*}
for all $t\in [\alpha_m (k - \bar{k}_m) , \min\{\alpha_m(k - \bar{k}_m+1),T\}]$ and $k\in \{\bar{k}_m, \ldots, \bar{k}_m + \lfloor T/\alpha_m\rfloor\}$. Since $B(0,r)$ is convex, it holds that $\|\bar{x}^{m}(t)\| \leqslant r $ for all $t\in [0,T]$. We also have $\|(\bar{x}^{m})'(t)\| = \|(x_{k+1}^{m}-x_k^{m})/\alpha_m\| \leqslant \sum_{i = 1}^N \|x_{k,i}^m - x_{k,i-1}^m\|/\alpha_m \leqslant N \delta'$ for all $t\in [\alpha_m (k - \bar{k}_m) , \min\{\alpha_m(k - \bar{k}_m+1),T\}]$ and $k\in \{\bar{k}_m, \ldots, \bar{k}_m + \lfloor T/\alpha_m\rfloor\}$. By successively applying the Arzel\`a-Ascoli and the Banach-Alaoglu theorems (see \cite[Theorem 4 p. 13]{aubin1984differential}), there exist a subsequence (again denoted by $(\alpha_{m})_{m\in \mathbb{N}}$) and an absolutely continuous function $x:[0,T]\rightarrow \mathbb{R}^n$ such that $\bar{x}^{m}(\cdot)$ converges uniformly to $x(\cdot)$ and $(\bar{x}^{m})'(\cdot)$ converges weakly to $x'(\cdot)$ in $L^1([0,T],\mathbb{R}^n)$.  We next verify that the limit $x(\cdot)$ is a solution to the differential inclusion with initial condition
\begin{equation}
\label{eq:ivp_rrm}
	x'(t) \in -\frac{1}{1-\beta}\sum_{i = 1}^N \partial f_i(x(t)),~~~ \text{for almost every}~ 
                t \in [0,T], ~~~ x(0) \in X_0.
\end{equation}
By subdifferential regularity of $f_1, \ldots, f_N$, we have $\partial (\sum_{i = 1}^N f_i) = \sum_{i = 1}^N \partial f_i$ \cite[p. 40, Corollary 3]{clarke1990}. It is thus easy to see that such $x(\cdot)$ is a subgradient trajectory of $f$ up to the multiplicative constant $c:= N/(1-\beta)>0$.

For any fixed $m\in \mathbb{N}$, we have that
\begin{equation}
\label{eq:rrm_onestep}
    x_{k,i}^m - x_{k,i-1}^m - \beta (x_{k,i-1}^m - x_{k,i-2}^m) \in -\alpha_m \partial f_{\sigma_i^k} (y_{k,i}^m)
\end{equation}
for all $k\in \{\bar{k}_m, \ldots, \bar{k}_m + \lfloor T/\alpha_m\rfloor \rfloor\}$ and $i\in \{0,\ldots,N\}$. For any fixed $k$, summing \eqref{eq:rrm_onestep} up for $i = 1,\ldots, N$ yields
\begin{equation*}
    x_{k+1,0}^m - x_{k,0}^m - \beta (x_{k+1,-1}^m - x_{k,-1}^m) \in -\alpha_m \sum_{i = 1}^N\partial f_{\sigma_i^k} (y_{k,i}^m).
\end{equation*}

Consider the linear interpolation of the iterates $x^m_{\bar{k}_m,-1},x^m_{\bar{k}_m + 1,-1}, \ldots, $ \\$ x^m_{\bar{k}_m + \lfloor T/\alpha_m\rfloor+1,-1}$, that is to say, the function $\bar{x}_{-1}^{m}(\cdot)$ defined from $[0,T]$ to $\mathbb{R}^n$ by 
\begin{equation*}
    \bar{x}_{-1}^{m}(t) := x_{k,-1}^{m} + (t-\alpha_m (k - \bar{k}_m))\frac{ x_{k+1,-1}^{m}-x_{k,-1}^{m}}{\alpha_m}    
\end{equation*}
for all $t\in [\alpha_m (k - \bar{k}_m) , \min\{\alpha_m(k - \bar{k}_m+1),T\}]$ and $k\in \{\bar{k}_m, \ldots, \bar{k}_m + \lfloor T/\alpha_m\rfloor\}$.

For almost every $t\in (0,T)$ and any neighborhood $\mathcal{N}$ of $0$, there exists $m_0\in \mathbb{N}$ such that for any $m \geqslant m_0$, there exists $k \in \{\bar{k}_m, \ldots, \bar{k}_m + \lfloor T/\alpha_m\rfloor\}$ such that
\begin{subequations}
    \begin{align}
        (\bar{x}^m)'(t) - \beta (\bar{x}_{-1}^m)'(t) &= \frac{x_{k+1,0}^m - x_{k,0}^m}{\alpha_m} - \beta \frac{x_{k+1,-1}^m - x_{k,-1}^m}{\alpha_m}\label{eq:rrm_neighbor_a} \\
        &\in -\sum_{i = 1}^N\partial f_{\sigma_i^k} (y_{k,i}^m)\label{eq:rrm_neighbor_b}\\
        &\subset -\sum_{i = 1}^N \left( \partial f_{\sigma_i^k}(x(t))+\mathcal{N}/N\right) \label{eq:rrm_neighbor_c}\\
        &\subset -\sum_{i = 1}^N  \partial f_{i}(x(t)) + \mathcal{N},\label{eq:rrm_neighbor_d}
    \end{align}
\end{subequations}
where \eqref{eq:rrm_neighbor_c} follows from upper semi-continuity of $\partial f_i$ \cite[2.1.5 Proposition (d)]{clarke1990} and
\begin{subequations}
    \begin{align*}
        \|y_{k,i}^m - x(t)\|&\leqslant \left\|y_{k,i}^m - \bar{x}^m(t)\right\| + \left\|\bar{x}^m(t) - x(t) \right\|\\
        &= \left\|y_{k,i}^m - x_k^m - (t-\alpha_m (k - \bar{k}_m))\frac{x_{k+1}^m - x_k^m}{\alpha_m}\right\|+ \left\|\bar{x}^m(t) - x(t) \right\|\\
        &\leqslant \left\|y_{k,i}^m - x_k^m\right\| + \left\|x_{k+1}^m - x_k^m\right\| + \left\|\bar{x}^m(t) - x(t) \right\|\\
        &\leqslant (|\gamma|+i)\delta'\alpha_m + N\delta'\alpha_m + \left\|\bar{x}^m(t) - x(t) \right\|\\
        &\rightarrow 0
    \end{align*}
\end{subequations}
as $m\rightarrow \infty$. It remains to show that $(\bar{x}^m_{-1})'(\cdot)$ converges weakly to $x'(\cdot)$ in $L^1([0,T],\mathbb{R}^n)$. Indeed, by \cite[Convergence Theorem p. 60]{aubin1984differential}, it then holds that $(x,(1-\beta)x') \in \mathrm{graph}(\sum_{i = 1}^N \partial f_i)$ and thus \eqref{eq:ivp_rrm} follows.

For any $m\in \mathbb{N}$ and almost every $s \in [0,T]$, $\|(\bar{x}^m_{-1})'(s)\| = \|(x_{k+1,-1}^m - x_{k,-1}^m)/\alpha_m\| \leqslant \sum_{i = 0}^{N-1} \|x_{k,i}^m - x_{k,i-1}^m\|/\alpha_m \leqslant N \delta'$ for some $k\in \mathbb{N}$. Thus it suffices to show that for all $t \in [0,T]$,
\begin{equation*}
    \int_0^t (\bar{x}^m_{-1})'(s)~ds \rightarrow \int_0^t x'(s)~ds.
\end{equation*}
Indeed, $\left\| \int_0^t (\bar{x}^m_{-1})'(s)~ds -  \int_0^t (\bar{x}^m)'(s)~ds\right\| = \cdots$
\begin{subequations}
    \begin{align*}
         =& \left\|\bar{x}^m_{-1}(t) - \bar{x}^m_{-1}(0) - (\bar{x}^m(t) - \bar{x}^m(0))\right\|\\
        =& \left\|x_{k,-1}^{m} + (t-\alpha_m (k - \bar{k}_m))\frac{ x_{k+1,-1}^{m}-x_{k,-1}^{m}}{\alpha_m} - x_{\bar{k}_m,-1}^{m} \right. \\
        &  \left. - \left(x_{k,0}^{m} + (t-\alpha_m (k - \bar{k}_m))\frac{ x_{k+1,0}^{m}-x_{k,0}^{m}}{\alpha_m} - x_{\bar{k}_m,0}^{m}\right)\right\|\\
        \leqslant & \left\|x_{k+1,-1}^{m} - x_{k+1,0}^{m}\right\| + \left\|x_{k,-1}^{m} - x_{k,0}^{m}\right\| + \left\|x_{\bar{k}_m,-1}^{m} - x_{\bar{k}_m,0}^{m}\right\|\\
        \leqslant & ~ \delta'\alpha_m  + \delta'\alpha_m + \delta'\alpha_m 
        \rightarrow 0
    \end{align*}
\end{subequations}
where $k = \bar{k}_m + \lfloor t/\alpha_m\rfloor$. As $x'(\cdot)$ is a weak limit of $(\bar{x}^m)'(\cdot)$, $(\bar{x}^m_{-1})'(\cdot)$ converges weakly to $x'(\cdot)$.

To sum up, we have shown that for every sequence $(\alpha_m)_{m \in \mathbb{N}}$ of positive numbers converging to zero and every sequence $(\bar{k}_m)_{m\in \mathbb{N}}$ of natural numbers, there exists a subsequence of natural numbers for which the corresponding linear interpolations uniformly converge towards a solution of the differential inclusion \eqref{eq:ivp_rrm}. The conclusion of the proposition now easily follows. To see why, one can reason by contradiction and assume that there exists $\epsilon>0$ such that for all $\bar{\alpha}>0$, there exist $\hat{\alpha} \in (0,\bar{\alpha}]$, $\hat{k} \in \mathbb{N}$, and a sequence $(x_k^m)_{k \in \mathbb{N}}\in \mathcal{M}(f,\hat{\alpha}, X_0,\hat{k})$ such that $x_0^m,\ldots, x_{\hat{k}}^m \in X_1$, and for any solution $x(\cdot)$ to the differential inclusion \eqref{eq:ivp_rrm}, it holds that $\|x_k^m - x(\hat{\alpha} (k - \hat{k})) \| > \epsilon$ for some $k \in \{\hat{k},\hat{k}+1, \hdots, \hat{k}+\lfloor T/\hat{\alpha}\rfloor\}$. We can then generate a sequence $(\alpha_m)_{m \in \mathbb{N}}$ of positive numbers converging to zero and a sequence $(\bar{k}_m)_{m\in \mathbb{N}}$ of natural numbers such that, for any solution $x(\cdot)$ to the differential inclusion \eqref{eq:ivp_rrm}, it holds that $\|x_k^m - x(\alpha_m(k - \bar{k}_m)) \| > \epsilon$ for some $k \in \{\bar{k}_m,\bar{k}_m+1, \hdots, \bar{k}_m+\lfloor T/\alpha_m\rfloor\}$. Since there exists a subsequence $(\alpha_{\varphi(m)})_{m \in \mathbb{N}}$ such that $(\bar{x}^{\varphi(m)}(\cdot))_{m\in \mathbb{N}}$ uniformly converges to a solution to the differential inclusion \eqref{eq:ivp_rrm}, we obtain a contradiction. 
\end{proof}
\begin{remark}
\label{remark:need_reg}
    To further see why subdifferential regularity is required for applying Definition \ref{def:approx_flow_new} to Algorithm \ref{alg:rrm}, consider a locally Lipschitz coercive semi-algebraic function $f:= (f_1 + f_2 + f_3)/3$ where $f_1,f_2,f_3:\mathbb{R} \rightarrow \mathbb{R}$ are defined by
$$
    f_1(x) := \max\{x,0\}~~~,~~~f_2(x) := \min\{x,0\}~~~,~~~f_3(x) := x^2~~~,~~~\forall x\in \mathbb{R}.
$$
Notice that $f_1,f_2,f_3$ are all locally Lipschitz but $f_2$ is not subdifferentially regular. We have $0 \in \partial f_1(0) = [0,1], 0\in \partial f_2(0) = [0,1]$, and $0 \in \partial f_3(0) = \{0\}$. Thus random reshuffling with momentum can get stuck at $0$. Meanwhile $0 \not \in \partial f(0) = \{1/3\}$. Therefore, the conclusion of Corollary \ref{cor:converge_critical} does not apply to this example.
\end{remark}

\begin{proposition}
\label{prop:cd_flow}
Random-permutations cyclic coordinate descent method (Algorithm \ref{alg:rcd}) is approximated by subgradient trajectories of continuously differentiable functions $f:\mathbb{R}^n \rightarrow \mathbb{R}$.
\end{proposition}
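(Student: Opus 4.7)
The plan is to follow the same overall strategy as in the proof of Proposition \ref{prop:rrm_flow}, specialized to the simpler setting of Algorithm \ref{alg:rcd}, where $\partial f = \{\nabla f\}$ and there is no momentum term to handle. Concretely, I want to show that if $(x^m_k)_{k\in\mathbb{N}}$ are generated by Algorithm \ref{alg:rcd} with step sizes $\alpha_m \searrow 0$, then the linear interpolations $\bar{x}^m(\cdot)$ of the outer iterates $(x^m_{\bar{k}_m + j})_{j}$ admit a subsequence converging uniformly on $[0,T]$ to an absolutely continuous function $x(\cdot)$ satisfying $x'(t) = -\nabla f(x(t))$ for almost every $t \in [0,T]$. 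Since the displacement per epoch will turn out to be $-\alpha\nabla f(x_k) + o(\alpha)$ and the interpolation advances by $\alpha$ per outer iteration, the multiplicative constant will be $c = 1$.

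First I would establish the analogue of Lemma \ref{lemma:O(alpha)}: for any bounded $X \subset \mathbb{R}^n$, there exist $\delta', \bar{\alpha} > 0$ such that for all $\alpha \in (0,\bar{\alpha}]$, $\bar{k} \in \mathbb{N}$, and any sequence $(x_{k,i})$ generated by Algorithm \ref{alg:rcd} with $x_0, \ldots, x_{\bar{k}} \in X$, one has $\|x_{k,i} - x_{k,i-1}\| \leqslant \delta'\alpha$ for $k = 0, \ldots, \bar{k}$ and $i = 1, \ldots, n$. This is easier than Lemma \ref{lemma:O(alpha)} because there is no momentum: since $\nabla f$ is continuous, it is bounded by some $r'$ on a ball $B(0,r) \supset X$, and a short induction on $(k,i)$ shows that the inner iterates remain in $B(0,r)$ provided $\alpha$ is small enough, from which $\|x_{k,i} - x_{k,i-1}\| = \alpha\|\nabla_{\sigma^k_i} f(x_{k,i-1})\| \leqslant r'\alpha$ is immediate.

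Next I would exploit the permutation structure to rewrite the total epoch displacement as
\[
\frac{x_{k+1} - x_k}{\alpha} \;=\; -\sum_{i=1}^n \nabla_{\sigma^k_i} f(x_{k,i-1}) \;=\; -\nabla f(x_k) \;-\; \sum_{i=1}^n \bigl([\nabla f(x_{k,i-1})]_{\sigma^k_i} - [\nabla f(x_k)]_{\sigma^k_i}\bigr)\,e_{\sigma^k_i},
\]
where the identity $\sum_{i=1}^n \nabla_{\sigma^k_i} f(x_k) = \nabla f(x_k)$ is used because $\sigma^k$ is a permutation. The error term is uniformly $o(1)$ as $\alpha \searrow 0$, because $\|x_{k,i-1} - x_k\| \leqslant n\delta'\alpha$ and $\nabla f$ is uniformly continuous on every compact set. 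Then, exactly as in the proof of Proposition \ref{prop:rrm_flow}, I choose $T > 0$ so that the outer iterates remain in a slightly enlarged ball throughout $[0,T]$, form the linear interpolations $\bar{x}^m(\cdot)$, note that their derivatives are bounded by $n\delta'$ almost everywhere, and invoke Arzel\`a--Ascoli together with Banach--Alaoglu to extract a subsequence converging uniformly to some absolutely continuous $x(\cdot)$ with $(\bar{x}^m)'(\cdot)$ converging weakly in $L^1([0,T],\mathbb{R}^n)$ to $x'(\cdot)$. Passing to the limit in the displayed identity and using upper semi-continuity (here continuity) of $\nabla f$ yields $x'(t) = -\nabla f(x(t))$ for almost every $t \in [0,T]$.

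The main obstacle, though modest, is controlling the inner-iterate drift error in the displayed identity, since only continuity of $\nabla f$ is assumed and not local Lipschitz continuity; this is resolved by invoking uniform continuity on a compact set that contains every inner iterate for all sufficiently small $\alpha$. Once this is in place, the concluding contradiction argument at the end of Proposition \ref{prop:rrm_flow}, which extracts the $\bar{\alpha}$ promised by Definition \ref{def:approx_flow_new} from the subsequential uniform convergence, carries over verbatim and completes the proof.
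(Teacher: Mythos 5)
Your proposal is correct and follows essentially the same route as the paper: an inductive bound keeping the inner iterates in a ball $B(0,r)$ over a horizon $T$ determined by $\sup_{B(0,r)}\|\nabla f\|$, the permutation identity $\sum_{i=1}^n \nabla_{\sigma^k_i} f(\cdot) = \nabla f(\cdot)$ to identify the per-epoch displacement with $-\nabla f$ up to a drift term, uniform continuity of $\nabla f$ on a compact set to kill that drift (the paper compares directly to $\nabla f(x(t))$ rather than to $\nabla f(x_k)$, an immaterial difference), Arzel\`a--Ascoli and Banach--Alaoglu to extract the limit trajectory, and the same contradiction argument to conclude. The only cosmetic difference is that you package the inner-step bound as a standalone analogue of Lemma \ref{lemma:O(alpha)} while the paper does the induction inline.
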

\begin{proof}
Similar to the proof of Proposition \ref{prop:rrm_flow}, let $X_0 \subset \mathbb{R}^n$ be a compact subset. Consider $r>0$ such that $X_0 \subset B(0,r/2) \subset \mathbb{R}^n$. We would like to find $T>0$ such that for any $\alpha \in (0,T/2], \bar{k}\in \mathbb{N}$, any sequence $(x_{k,i})_{(k,i)\in\mathbb{N}\times \{0,\ldots,n\}}$ generated by Algorithm \ref{alg:rcd} with step size $\alpha$ for which $x_{\bar{k},0} \in X_0$, we have that $x_{k,i}\in B(0,r)$ for all $k = \bar{k}, \ldots, \bar{k} + K-1$ and $i = 0,1, \ldots, n$ where $K:= \lfloor T/\alpha\rfloor+1$. As $\nabla f$ is continuous, we have $M:= \sup\{\|\nabla f(x)\|: x\in B(0,r)\}<\infty$. Let $T:= r/(4Mn)>0$. Then $\alpha K = \alpha (\lfloor T/\alpha\rfloor+1) \leqslant 2T$ and $\alpha \leqslant 2T/K = r/(2KMn) $. For any $k = \bar{k},\ldots, \bar{k} + K-1$ and $i = 0, \ldots, n-1$, if one assumes that $x_{k,i} \in B(0,r)$, then $\|x_{k,i+1} - x_{k,i}\|= \|\alpha  \nabla_{\sigma^k_{i+1}} f(x_{k,i})\| \leqslant \alpha M \leqslant r/(2Kn)$. As $x_{\bar{k},0} \in B(0,r/2)$, by induction, we have $x_{k,i}\in B(0,r/2 + (kn+i)r/(2Kn)) \subset B(0,r)$ for any $k = \bar{k},\ldots, \bar{k} + K-1$ and $i = 0, \ldots, n$.

We next show that Algorithm \ref{alg:rcd} is approximated by subgradient trajectories, which are solutions to the following differential equation with initial condition
\begin{equation}
\label{eq:ivp_rcd}
	x'(t) = -\nabla f(x(t)),~~~ \text{for almost every}~ 
                t \in [0,T], ~~~ x(0) \in X_0.
\end{equation}
Denote by $\mathcal{M}$ the random-permutations cyclic coordinate descent method defined by Algorithm \ref{alg:rcd}. Let $(\alpha_m)_{m \in \mathbb{N}}$ denote a sequence of positive numbers that converges to zero and $(\bar{k}_m)_{m\in \mathbb{N}}$ be a sequence of natural numbers. Without loss of generality, we may assume that $(\alpha_m)_{m \in \mathbb{N}} \subset (0,T/2]$. For each $m \in \mathbb{N}$, we attribute a sequence of iterates $(x^m_k)_{k\in \mathbb{N}} \in \mathcal{M}(f,\alpha_m, X_0,\bar{k}_m)$. Consider the linear interpolation of the iterates $x^m_{\bar{k}_m},x^m_{\bar{k}_m + 1}, \ldots, x^m_{\bar{k}_m + \lfloor T/\alpha_m\rfloor+1}$, that is to say, the function $\bar{x}^{m}(\cdot)$ defined from $[0,T]$ to $\mathbb{R}^n$ by
\begin{equation*}
    \bar{x}^{m}(t) := x_k^{m} + (t-\alpha_m (k - \bar{k}_m))\frac{x_{k+1}^{m}-x_k^{m}}{\alpha_m}    
\end{equation*}
for all $t\in [\alpha_m k , \min\{\alpha_m(k+1),T\}]$ and $k\in \{\bar{k}_m,\hdots,\bar{k}_m + \lfloor T/\alpha_m \rfloor\}$. Recall that $x_k^m = x_{k,0}^m = x_{k-1,n}^m$. As we have shown in the first paragraph of the proof, $x_{k,i}\in B(0,r)$ for all $k = \bar{k}_m, \ldots, \bar{k}_m + \lfloor T/\alpha_m\rfloor$ and $i = 0,1, \ldots, n$, thus $x^m_{\bar{k}_m},x^m_{\bar{k}_m + 1}, \ldots, x^m_{\bar{k}_m + \lfloor T/\alpha_m\rfloor+1}\in B(0,r)$. Since $B(0,r)$ is convex, it holds that $\|\bar{x}^{m}(t)\| \leqslant r $ for all $t\in [0,T]$. Observe that $(\bar{x}^{m})'(t) = (x_{k+1}^{m}-x_k^{m})/\alpha_m = -\sum_{i = 1}^n \nabla_{\sigma^k_i} f(x_{k,i-1})$ for all $t\in (\alpha_m k , \min\{\alpha_m(k+1),T\})$ and $k\in \{\bar{k}_m,\hdots,\bar{k}_m+ \lfloor T/\alpha_m \rfloor\}$. Hence, we have that $$\|(\bar{x}^{m})'(t)\|  = \|\sum_{i = 1}^n \nabla_{\sigma^k_i} f(x_{k,i-1})\| \leqslant \sum_{i = 1}^n\|\nabla_{\sigma^k_i} f(x_{k,i-1})\|\leqslant nM$$ for almost every $t\in [0,T]$. By successively applying the Arzel\`a-Ascoli and the Banach-Alaoglu theorems (see \cite[Theorem 4 p. 13]{aubin1984differential}), there exist a subsequence (again denoted $(\alpha_{m})_{m\in \mathbb{N}}$) and an absolutely continuous function $x:[0,T]\rightarrow \mathbb{R}^n$ such that $\bar{x}^{m}(\cdot)$ converges uniformly to $x(\cdot)$ and $(\bar{x}^{m})'(\cdot)$ converges weakly to $x'(\cdot)$ in $L^1([0,T],\mathbb{R}^n)$.

For almost every $t\in [0,T]$ and any $\alpha_m$ in the sequence, $t\in (\alpha_{m} k , \min\{\alpha_{m}(k+1),T\})$ for some $k \in \{\bar{k}_m,\hdots,\bar{k}_m + \lfloor T/\alpha_m \rfloor\}$. We fix any such $t$ and any $\xi>0$ from now on. As $\nabla f$ is continuous, there exists $\delta>0$ such that $\|\nabla_i f(y) - \nabla_i f(x(t))\| \leqslant \|\nabla f(y) - \nabla f(x(t))\| \leqslant \xi/(2n)$, for all $y \in B(x(t),\delta)$ and $i = 1,\ldots, n$. Since $(\bar{x}^m(\cdot))_{m\in \mathbb{N}}$ converges uniformly to $x(\cdot)$, there exists $m_0\in \mathbb{N}$ such that $\|\bar{x}^m(t) - x(t)\| \leqslant  \min\{\epsilon/2,\delta/2\}$ for any $m \geqslant m_0$. As $\lim_{m \rightarrow \infty}\alpha_m = 0$, there exists $m_1\geqslant m_0$ such that $\alpha_m \leqslant \delta/(4nM)$ for all $m \geqslant m_1$. We next show that $\|(\bar{x}^{m})'(t) - \nabla f(x(t))\|\leqslant \xi/2$ for all $m\geqslant m_1$. Indeed, if it is the case, then
\begin{subequations}
    \begin{align*}
        (\bar{x}_m(t),\bar{x}'_m(t)) & \in B\left(x(t),\min\left\{\frac{\xi}{2},\frac{\delta}{2}\right\}\right) \times B\left(-\nabla f(x(t)),\frac{\xi}{2}\right) \\
        & \subset \mathrm{graph}\left(-\nabla f\right) + B(0,\xi)
    \end{align*}
\end{subequations}
and by \cite[Convergence Theorem p. 60]{aubin1984differential}, it holds that $x'(t)= -\nabla f(x(t))$ for almost every $t\in [0,T]$. The sequence of initial iterates $(x^{m}(0))_{m\in\mathbb{N}}$ lies in the compact set $X_0$, hence its limit $x(0)$ lies in $X_0$ as well. As a result, $x(\cdot)$ is a solution to the differential inclusion \eqref{eq:ivp_rcd}.

Note that for any $i = 1, \ldots, n$, 
\begin{subequations}
    \begin{align*}
        &\|x_{k,i-1}^m - \bar{x}^m(t)\|\\
        =& \left\|x_k^m - \alpha_m\sum\limits_{j = 1}^{i-1}\nabla_{\sigma^k_j} f(x_{k,j-1}^m) - x_k^{m} - (t-\alpha_m (k - \bar{k}_m))\frac{x_{k+1}^{m}-x_k^{m}}{\alpha_m}\right\|\\
        =& \left\|\alpha_m\sum\limits_{j = 1}^{i-1}\nabla_{\sigma^k_j} f(x_{k,j-1}^m) + (t-\alpha_m (k - \bar{k}_m))\frac{x_{k+1}^{m}-x_k^{m}}{\alpha_m}\right\|\\
        \leqslant& \alpha_m \left\|\sum\limits_{j = 1}^{i-1}\nabla_{\sigma^k_j} f(x_{k,j-1}^m)\right\| + \|x_{k+1}^{m}-x_k^{m}\|\\
        \leqslant& \alpha_m \sum\limits_{j = 1}^{i-1}\left\|\nabla_{\sigma^k_j} f(x_{k,j-1}^m)\right\| + \alpha_m \|(\bar{x}^m)'(t)\|\\
        \leqslant& \alpha_m (i-1)M + \alpha_m nM\\
        \leqslant& 2\alpha_m nM \leqslant \delta/2.
    \end{align*}
\end{subequations}
Thus, $\|x_{k,i-1}^m - x(t)\| \leqslant \|x_{k,i-1}^m - \bar{x}^m(t)\| + \|\bar{x}^m(t) - x(t)\| \leqslant \delta/2 + \min\{\xi/2,\delta/2\}\leqslant \delta$ for all $i = 1, \ldots , n$ and $m \geqslant m_1$. It follows that
\begin{subequations}
    \begin{align*}
        \left\|(\bar{x}^m)'(t) - \nabla f(x(t))\right\| &= \left\|\sum_{i = 1}^n\nabla_{\sigma^k_i} f(x_{k,i-1}^m) -  \nabla f(x(t))\right\|\\
        &= \left\|\sum_{i = 1}^n\left(\nabla_{\sigma^k_i} f(x_{k,i-1}^m) - \nabla_{\sigma^k_i} f(x(t))\right)\right\|\\
        &\leqslant \sum_{i = 1}^n \left\| \nabla_{\sigma^k_i} f(x_{k,i-1}^m) - \nabla_{\sigma^k_i} f(x(t))\right\|\\
        &\leqslant \sum_{i = 1}^n \frac{\xi}{2n} = \frac{\xi}{2}.
    \end{align*}
\end{subequations}

To sum up, we have shown that for every sequence $(\alpha_m)_{m \in \mathbb{N}}$ of positive numbers converging to zero and every sequence $(\bar{k}_m)_{m\in \mathbb{N}}$ of natural numbers, there exists a subsequence of natural numbers for which the corresponding linear interpolations uniformly converge towards a solution of the differential equation \eqref{eq:ivp_rcd}. The conclusion of the proposition now easily follows using the same argument as the last paragraph in the proof of Proposition \ref{prop:rrm_flow}. 
\end{proof}

Note that in the proof above, we do not make use of the set $X_1$ that appears in Definition \ref{def:approx_flow_new}. This is because that for any iterates $(x_k)_{k \in \mathbb{N}}$ generated by the random-permutations cyclic coordinate descent method, $(x_k)_{k\geqslant \bar{k}}$ is unrelated to $x_0,\ldots, x_{\bar{k} - 1}$ if given $x_{\bar{k}}$. Contrary to Proposition \ref{prop:rrm_flow}, Proposition \ref{prop:cd_flow} cannot be relaxed to locally Lipschitz functions. For example, the coordinate descent method can get stuck at $(1,1)$, which is not a critical point of $f(x_1,x_2) = \max\{|x_1|,|x_2|\}$.

We conclude this paper by illustrating Theorem \ref{thm:converge}, Corollary \ref{cor:converge_critical}, Proposition \ref{prop:rrm_flow}, and Proposition \ref{prop:cd_flow} on two examples. The first (Figure \ref{fig:nonsmooth}) is nonsmooth and the second (Figure \ref{fig:smooth}) is continuously differentiable. One can see that the iterates indeed track a subgradient trajectory up to a certain time, then go on to track another subgradient trajectory, after which they stabilize around a critical point.

\begin{figure}[ht!]
\centering
\begin{subfigure}{.49\textwidth}
  \centering
  \includegraphics[width=0.95\textwidth]{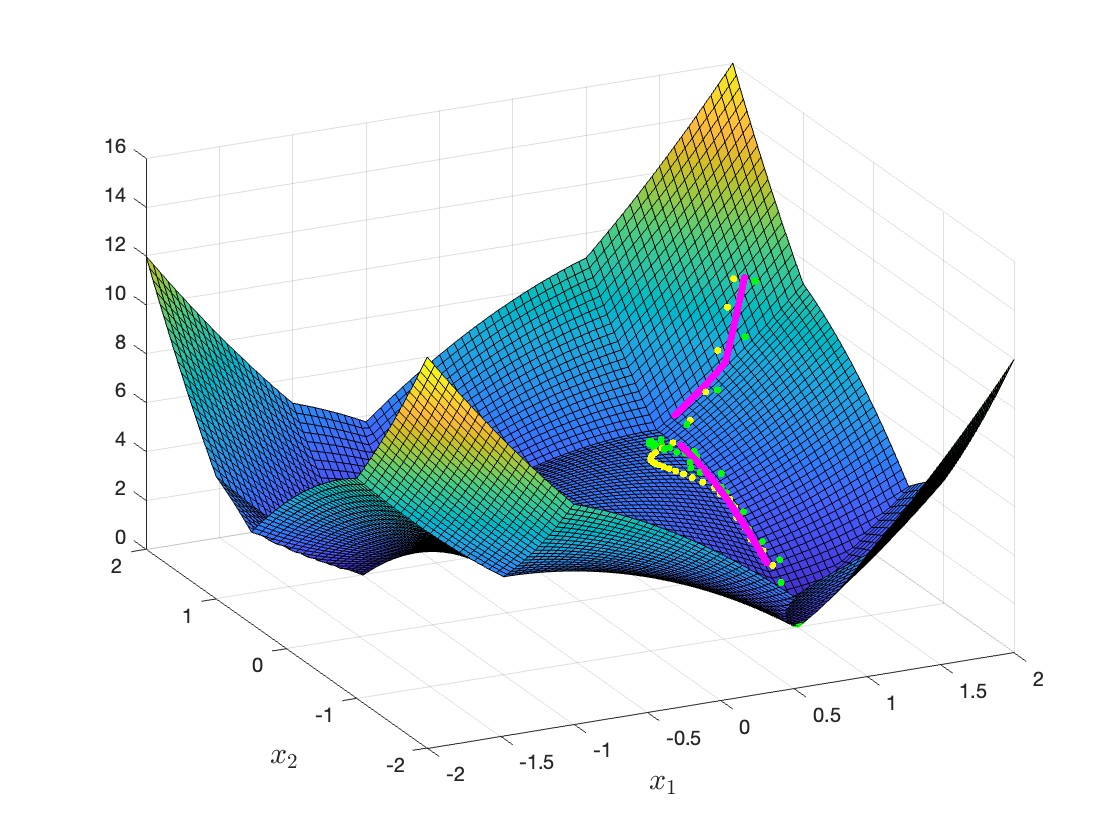}
  \caption{$f(x_1,x_2) = |x_1^2 - 1| + 2|x_1x_2+1| + |x_2^2 - 1|$.}
  \label{fig:nonsmooth}
\end{subfigure}
 \begin{subfigure}{.49\textwidth}
  \centering
   \includegraphics[width=0.95\textwidth]{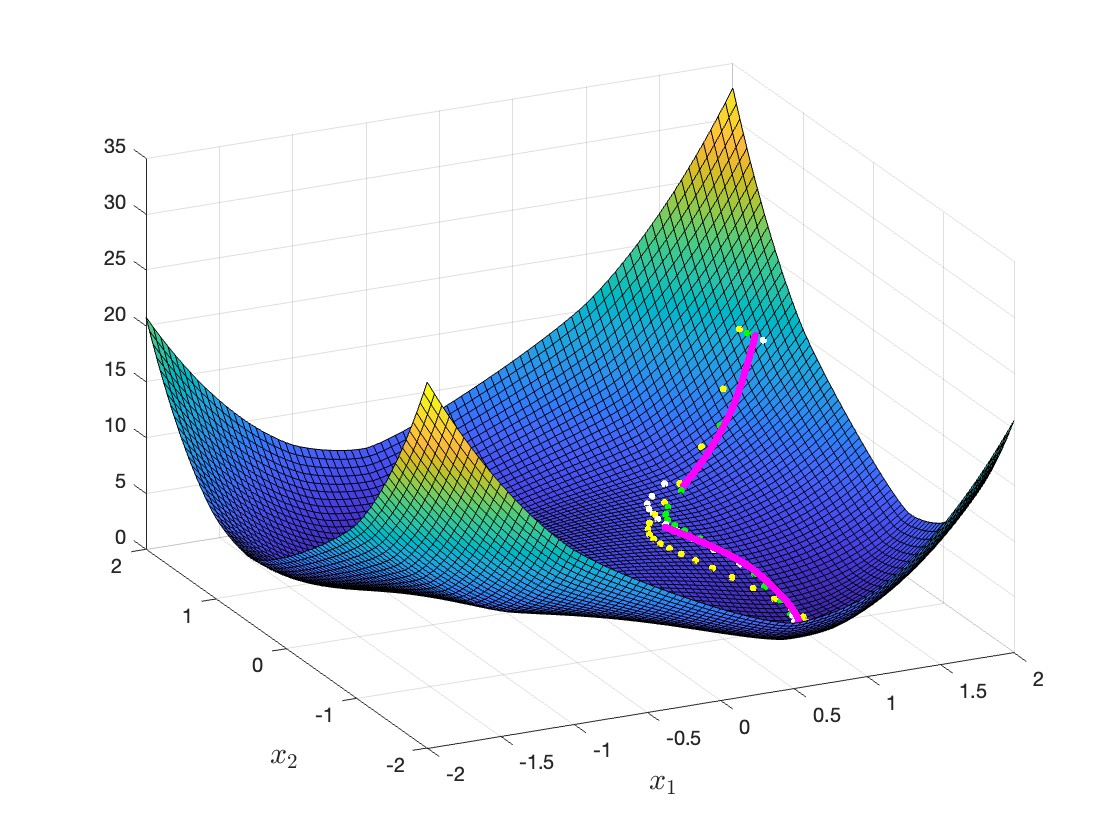}
  \caption{$f(x_1,x_2) = |x_1^2 - 1|^{3/2} + 2|x_1x_2+1|^{3/2} + |x_2^2 - 1|^{3/2}$.}
   \label{fig:smooth}
\end{subfigure}
\caption{The subgradient method with momentum, random reshuffling with momentum, and random-permutations cyclic coordinate descent method are in yellow, green, and white respectively. Subgradient trajectories are in magenta.}
\end{figure}
\section*{Acknowledgments}
We thank the reviewers and the co-editor for their valuable feedback.
\bibliographystyle{abbrv}    
\bibliography{mybib}

\end{document}